\theoremstyle{thmstyleone}%
\newtheorem{theorem}{Theorem}[section]
\newtheorem{proposition}{Proposition}[section]
\newtheorem{lemma}{Lemma}[section]
\newtheorem{corollary}{Corollary}[section]
\newtheorem{definition}{Definition}[section]
\newtheorem{remark}{Remark}[section]
\newtheorem{problem}{Problem}[section]
\begin{document}

\title[An algebraic approach to Latin squares of prime power order by LPPs]{An algebraic approach to Latin squares of prime power order by local permutation polynomials}

\author*[1]{\fnm{Ra\'ul M.} \sur{Falc\'on}}\email{rafalgan@us.es}

\author[2]{\fnm{Jaime} \sur{Guti\'errez}}\email{jaime.gutierrez@unican.es}

\author[3]{\fnm{Jorge} \sur{Jim\'enez Urroz}}\email{jorge.urroz@upm.edu}

\affil*[1]{\orgdiv{Departamento de Matem\'atica Aplicada I}, \orgname{Universidad de Sevilla}, \orgaddress{\street{Avenida Reina Mercedes, 4 A}, \city{Seville}, \postcode{41012}, \country{Spain}}}

\affil[2]{\orgdiv{Departamento de Matem\'atica Aplicada y Ciencias de la Computaci\'on}, \orgname{Universidad de Cantabria}, \orgaddress{\street{Avenida Los Castros 44}, \city{Santander}, \postcode{39005}, \country{Spain}}}

\affil[3]{\orgdiv{Departamento de Matem\'atica e Inform\'atica Aplicadas a la Ingenier\'\i a Civil y Naval}, \orgname{Universidad Polit\'ecnica de Madrid}, \orgaddress{\street{Calle Profesor Aranguren S/N}, \city{Madrid}, \postcode{28040}, \country{Spain}}}

\abstract{Every Latin square of prime power order $q$ is uniquely described by a local permutation polynomial (LPP) in the polynomial ring $\mathbb{F}_q[x,y]$. Despite this equivalence, one may find in the literature only some preliminary results on the relationship among Latin squares and LPPs. This paper delves into this topic by showing how the coefficients of any LPP are identified with the zeros of an algebraic set over $\mathbb{F}_q$. This allows for an algebraic description of all Latin squares of order $q$ by means of a unique polynomial in $\mathbb{F}_q[x,y]$, whose coefficients satisfy the constraints defined by the algebraic set under consideration. In order to make much easier the construction of this polynomial, we also deal with the natural translation to LPPs of both notions of reduced and isotopic Latin squares. Our algebraic approach is readily adapted to identify both types of Latin squares. All of the above is constructively illustrated for $q\in\{4,5\}$. We finish our study with the natural translation to LPPs of both notions of complete mappings and orthomorphisms of quasigroups, showing their relationship with transversals and isotopisms of Latin squares.}

\keywords{Local permutation polynomial; permutation polynomial; Latin square; quasigroup; isotopism; complete mapping; orthomorphism.}

\pacs[MSC Classification]{11T06; 05B05; 20N05}

\maketitle

\section{Introduction}\label{sec:introduction}

Let $\mathbb{F}_q$ be the finite field of order a prime power $q$. It is well known \cite{Lidl1997} that any mapping from $\mathbb{F}_q$ to $\mathbb{F}_q$ is defined by a unique polynomial in the polynomial ring $\mathbb{F}_q[x]$ of degree lower than $q$. This is called a {\em permutation polynomial} (PP) if it acts as a permutation on $\mathbb{F}_q$. This kind of polynomials has relevant applications in cryptography, coding theory, and combinatorial design (see \cite{Hou2015} for a survey on PPs.) Furthermore, a polynomial $f$ in the polynomial ring $\mathbb{F}_q[x,y]$ is called a {\em local permutation polynomial} (LPP) if both polynomials $f(x,a)$ and $f(a,x)$ are PPs in $\mathbb{F}_q[x]$ for all $a\in\mathbb{F}_q$. From now on, we denote by $\mathcal{P}_q$ and $\mathcal{LP}_q$ the respective sets of PPs in $\mathbb{F}_q[x]$ and LPPs in $\mathbb{F}_q[x,y]$. Throughout this paper, we identify all functions $\mathbb{F}_q^2 \to \mathbb{F}_q$ with the interpolating polynomial, with degree less than $q$ in each variable.

Each polynomial in $\mathcal{LP}_q$ describes uniquely both a quasigroup and a Latin square of order $q$. Recall here that, if $\mathbb{F}_q$ is endowed with a binary operation $f$ such that both equations $f(s,a)=b$ and $f(a,s)=b$ have exactly one solution $s\in \mathbb{F}_q$ for any $a,b\in \mathbb{F}_q$, then the pair $(\mathbb{F}_q,f)$ is said to be a {\em quasigroup} of order $q$. Its multiplication table is a {\em Latin square} of the same order, which we denote by $L_f$. (That is, $L_f[a,b]:=f(a,b)$ for all $a,b\in\mathbb{F}_q$.) This is a $q\times q$ array such that each element in $\mathbb{F}_q$ appears exactly once per row and exactly once per column. In cryptography, quasigroups and Latin squares have been used to design error-correcting codes, cryptographic primitives, pseudorandom sequences with high period growth or image encryption algorithms, among other applications (see \cite{Shcherbacov2017}). From now on, we denote by $\mathcal{L}_q$ the set of Latin squares over $\mathbb{F}_q$. Mullen \cite{Mullen1981} (see also \cite{Mullen1980, Mullen1980a}) realized that every Latin square $L\in\mathcal{L}_q$ is uniquely identified with a polynomial $f_L\in\mathcal{LP}_q$, with $\mathrm{deg}(f_L)<2q-1$, so $f_L(a,b):=L[a,b]$ for all $a,b\in\mathbb{F}_q$. That is, $(L,f_L)$ is a quasigroup.

This equivalence among Latin squares, quasigroups and LPPs gives rise to a natural translation of related concepts and results. This paper delves into this topic by focusing on how the study of LPPs can provide some relevant information on the problems of counting, enumerating, and distributing Latin squares into isotopism classes. These problems have been completely solved only for Latin squares of order $n\leq 11$ (see \cite{Hulpke2011,Kolesova1990,McKay2007}). In the literature, one may find only some preliminary results on this topic \cite{Mullen1980, Mullen1980a, Lee1994, Diestelkamp2004, Gutierrez2023, Gutierrez2025,Gutierrez2025a,Hasan2026}. In particular, Mullen  \cite{Mullen1980} described  the set of coefficients of every polynomial in $\mathcal{LP}_p$, with $p$ an odd prime, as an algebraic set over $\mathbb{F}_p$. This algebraic set is in one-to-one correspondence with the set $\mathcal{L}_p$. In this regard, he proved that every polynomial in $\mathcal{LP}_2$ and $\mathcal{LP}_3$ is linear. More precisely,

\begin{equation}\label{eq_LPP2}
\mathcal{LP}_2=\bigcup_{s_{00}\in \mathbb{F}_2}\left\{x+y+s_{00}\right\}
\end{equation}
and
\begin{equation}\label{eq_LPP3}\mathcal{LP}_3=\bigcup_{a,b,c\in \mathbb{F}_3}\left\{s_{10}x+s_{01}y+s_{00}\colon\,  s_{10}^2s_{01}^2=1\right\}.
\end{equation}
As a consequence,
\[\mathcal{L}_2\cong \mathbb{F}_2 \hspace{1cm}\text{and}\hspace{1cm} \mathcal{L}_3\cong \left\{(a,b,c)\in\mathbb{F}_3^3\colon\,a^2b^2=1\right\}.\]
This paper deals with the following more general question. 

\begin{problem}\label{problem_algebraic set} Let $q$ be a prime power. Describe explicitly the sets of coefficients of any polynomial in $\mathcal{P}_q$ (respectively, $\mathcal{LP}_q$), as an algebraic set over $\mathbb{F}_q$. Use this algebraic set to count the number of polynomials in $\mathcal{LP}_q$.
\end{problem}

In this paper, we show how the existence of these algebraic sets allows the description of all Latin squares of order $q$ by a unique polynomial in $\mathbb{F}_q[x,y]$. This unified representation makes it much easier to deal with the problems of counting, enumerating, and classifying Latin squares. To the best of our knowledge, this paper is a pioneer in dealing with Problem \ref{problem_algebraic set} for LPPs. Concerning the set $\mathcal{P}_q$, in the literature one may find some partial results characterizing and counting some PPs with either few terms (see, for example, \cite{Hou2015a, Jarali2023, Tu2019, Ozbudak2023, Zhang2023}, and the references therein) or small degree. In this last regard, we have $ax+b\in\mathcal{P}_q$ for all $a,b\in\mathbb{F}_q$. In addition, Mollin and Small \cite{Mollin1987} proved the following two statements for PPs with degrees two and three.
\begin{enumerate}
    \item[(D2)] $ax^2+bx+c\in\mathcal{P}_q$, with $a\neq 0$, if and only if $b=0$ and $\mathrm{char}(\mathbb{F}_q)=2$.

    \item[(D3)] If $\mathrm{char}(\mathbb{F}_q)\neq 3$, then $ax^3+bx^2+cx+d\in\mathcal{P}_q$, with $a\neq 0$, if and only if $b^2=3ac$ and $q\equiv 2\, (\mathrm{mod}\, 3)$.
\end{enumerate}
Let $\mathcal{P}_{q,d}$ be the subset of polynomials of degree $d$ in $\mathcal{P}_q$. The problem of determining the number $N_q(d):=|\mathcal{P}_{q,d}|$ was introduced in \cite{Lidl1988}. In particular, $N_q(1)=q(q-1)$, and, if $q>2$, then $N_q(d)=0$ whenever $d$ divides $q-1$. Hence, $q-2$ (respectively, $2q-4$) is an upper bound for the degree of a polynomial in $\mathcal{P}_q$ (respectively, $\mathcal{LP}_q$), with $q>2$. The upper bound for LPPs decreases to $q-2$ if the polynomial describes an $m$-circulant Latin square (see \cite{Diestelkamp2004}). A formula for $N_p(p-2)$, with $p$ prime, was provided in \cite{Das2002} in terms of the permanent of a Vandermonde matrix whose entries are the $p^{\mathrm{th}}$ roots of unity. (A generalization for prime powers was provided in \cite{Kim2016}.) The number of polynomials in $\mathcal{P}_q$ of degree less than $q-2$ has been asymptotically bounded in \cite{Konyagin2002, Das2002, Kim2016}. This last number is explicitly only known for $q\leq 11$ (see \cite{Konyagin2002}). There is no study on the distribution of polynomials in $\mathcal{LP}_q$ with respect to their total degrees.

\vspace{0.2cm}

The paper is organized as follows. 
\begin{itemize}
    \item In Section \ref{sec:preliminaries}, we indicate some preliminary concepts and results on algebraic geometry and Latin squares that are used throughout the paper. 
    
    \item In Section \ref{sec:description}, we give the explicit description required in Problem \ref{problem_algebraic set} and we describe an algebraic geometry procedure to efficiently determine the constraints satisfied by the coefficients of any polynomial in $\mathcal{P}_q$ or $\mathcal{LP}_q$. We use this procedure to characterize all polynomials in $\mathcal{P}_4$, $\mathcal{P}_5$ and $\mathcal{LP}_4$. We conveniently modify our procedure to deal with polynomials of a given degree, symmetric LPPs and the so-called reduced LPPs, which we introduce as a natural translation of reduced Latin squares. In particular, we determine the values of $N_q(d)$ for all $q\leq 13$, and also the number of symmetric and reduced polynomials in $\mathcal{LP}_q$, with $q\in\{4,5\}$, of any given degree.
    
    \item In Section \ref{sec:symmetries}, we introduce the natural translation to LPPs of the notion of isotopism of Latin squares, which allows the distribution of LPPs into isotopism classes. In particular, the class containing the linear polynomial $x+y$ gives rise to what we call isolinear LPPs. We show that any polynomial that is not isolinear constitutes a solution of an open problem raised by Lee and Ko in \cite{Lee1994}. We also describe an isotopism that maps any polynomial in $\mathcal{L}_q$ to a reduced LPP. We illustrate for $q=4$ how this map facilitates the distribution of LPPs into isotopism classes.

    \item Finally, Section \ref{sec:Orthomorphism} deals with the natural translation to LPPs of the notions of complete mappings and orthomorphisms of Latin squares of quasigroups, showing their relationship with transversals and isotopisms of Latin squares. Winterhof  \cite{Winterhof2014} had already considered the generalization of these concepts using PPs. 
\end{itemize}

\section{Preliminaries}\label{sec:preliminaries}

In this section, we indicate some preliminary notions and results on algebraic geometry and transversals and isotopisms of Latin squares that we use throughout the paper. We refer the reader to \cite{Cox2015,Keedwell2015} for two more details on these topics. In addition, we refer the reader to \cite{Montgomery2024,Wanless2011} for a pair of comprehensive surveys on transversals of Latin squares.

\subsection{Algebraic geometry}

Let $\mathbb{F}_q[X]$ be a multivariate polynomial ring over the finite field $\mathbb{F}_q$, with $q$ a prime power, on a set of variables $X=\{x_1,\ldots,x_n\}$. An {\em ideal} of $\mathbb{F}_q[X]$ is any subset $I\subseteq \mathbb{F}_q[X]$ such that $0\in I$; $f+g\in I$ for all $f,g\in I$; and $fg\in I$ for all $(f,g)\in I\times \mathbb{F}_q[X]$. Its {\em algebraic set} is the set of zeros
\[\mathcal{Z}(I):=\left\{(s_1,\ldots,s_n)\in\mathbb{F}_q^n\colon f(s_1,\ldots,s_n)=0, \text{ for all } f\in I\right\}.\]
The ideal $I$ is {\em zero-dimensional} if and only if $\dim_{\mathbb{F}_q}\left(\mathbb{F}_q[X] / I\right)<\infty$. This is {\em radical} if every polynomial $f\in \mathbb{F}_q[X]$ such that $f^m\in I$ for some positive integer $m$ belongs to $I$. If the ideal $I$ is zero-dimensional and radical, then $\mathcal{Z}(I)$ contains precisely $\dim_{\mathbb{F}_q}\left(\mathbb{F}_q[X] / I\right)$ distinct elements (see \cite[Theorem 3.7.19]{Kreuzer2000}). Its computation follows from  the reduced Gr\"obner basis of the ideal, which also makes easier the computation of $\mathcal{Z}(I)$. We remind the concept of Gr\"obner basis in the following paragraph.

The ideal of $\mathbb{F}_q[X]$ that is {\em generated by} the polynomials $f_1,\ldots,f_n\in \mathbb{F}_q[X]$ is
\[\langle\,f_1,\ldots,f_n\,\rangle:=\left\{\sum_{i=1}^n f_i\cdot g_i\colon\, g_i\in\mathbb{F}_q[X], \text{ for all } i\right\}.\]
The largest monomial of a polynomial $f\in I$, with respect to a given monomial term ordering $\preceq$ is its {\em leading monomial}. The ideal generated by all the leading monomials in $I$ is the {\em initial ideal} $I_{\preceq}$. A generating set of $I$ whose leading monomials generate $I_{\preceq}$ is a {\em Gr\"obner basis} of $I$. It is {\em reduced} if all its polynomials are monic, and no monomial in a generator is generated by the leading monomials of the other generators. This reduced Gr\"obner basis is unique. Its computation is extremely sensitive to the number of variables, and the length and degree of the generators \cite{Hashemi2011}.

The computation of reduced Gr\"obner bases and dimensions of quotient rings in this paper has always been done by using our own library {\em lpp.lib} in the open computer algebra system for polynomial computations {\sc Singular} \cite{Decker2025}. This library is available as supplementary material to the paper.

\subsection{Transversals and isotopisms of Latin squares}

Every Latin square $L\in\mathcal{L}_q$, with entries in $\mathbb{F}_q$, is uniquely determined by its set of entries
\[\mathrm{Ent}(L):=\left\{\left(a,b,L[a,b]\right)\colon\,a,b\in\mathbb{F}_q\right\}.\]
A {\em transversal} of $L$ is any subset $S\subset\mathrm{Ent}(L)$ formed by $q$ entries such that no two entries belong to the same row or the same column and no two entries contain the same symbol. Transversals of Latin squares are uniquely related to complete mappings and orthomorphisms of quasigroups, whose notions we recall here by means of the so-called Hadamard quasigroup product (see \cite{Falcon2023, Falcon2025}). The {\em Hadamard quasigroup product} of two permutations $g_1$ and $g_2$ in the symmetric group $\mathcal{S}_q$ on $\mathbb{F}_q$ with respect to a quasigroup $(\mathbb{F}_q,f)$ is a map $\odot_f(g_1,g_2):\mathbb{F}_q\to \mathbb{F}_q$ that is defined so that

\begin{equation}\label{eq_hqp_multiary}
\odot_f(g_1,g_2)(a):=f(g_1(a),\,g_2(a))
\end{equation}
for all $a\in\mathbb{F}_q$. If $\mathrm{Id}$ is the trivial permutation on $\mathbb{F}_q$, then a {\em complete mapping} of the quasigroup $(\mathbb{F}_q,f)$ is any permutation $g\in\mathcal{S}_q$ such that $\odot_f(\mathrm{Id},g)\in\mathcal{S}_q$. The latter is called an {\em orthomorphism} of the quasigroup. Lemma 4.2 in \cite{Falcon2025}  (see also Theorem 6.2 in \cite{Wanless2011}) ensures the following result.

\begin{lemma}\label{lemma_transversals} Let $(\mathbb{F}_q,f)$ be a quasigroup. A subset $S\subset \mathrm{Ent}(L_f)$ is a transversal of the Latin square $L_f\in\mathcal{L}_q$ if and only if there is a complete mapping $g$ of the quasigroup such that 
\[S=\left\{(a,g(a),\odot_f(\mathrm{Id}
,g)(a))\colon\, a\in \mathbb{F}_q \right\}.\]
\end{lemma}

\vspace{0.2cm}

Niederreiter  and Robinson \cite{Niederreiter1982} proved that every polynomial in $\mathcal{P}_q$ describing a complete mapping of the additive group $(\mathbb{F}_q,+)$, with $q>3$ an odd prime, has degree $\leq q-3$. Then, they determined all these polynomials having degree $<6$, as well as those of degree six for finite fields of order prime to six. 

\vspace{0.2cm}

Now, we recall how the symmetric group $\mathcal{S}_q$ acts on the set $\mathcal{L}_q$. An {\em isotopism} in $\mathcal{L}_q$ is every triple $\Theta:=(\theta_1,\theta_2,\theta_3)\in\mathcal{S}_q\times \mathcal{S}_q\times\mathcal{S}_q$. The {\em isotope} $L^{\Theta}\in \mathcal{L}_q$ of a Latin square $L\in \mathcal{L}_q$ is defined so that
\begin{equation}\label{eq_LS_isotopic}
\mathrm{Ent}\left(L^{\Theta}\right):=\left\{(\theta_1(a),\theta_2(b),\theta_3(c))\colon\, (a,b,c)\in\mathrm{Ent}(L)\right\}.
\end{equation}
That is, $\theta_1$ permutes the rows of $L$, $\theta_2$ permutes its columns, and $\theta_3$ permutes the symbols appearing in its cells. It is also said that $L$ and $L^\Theta$ are {\em isotopic}.  The isotopism $\Theta$ is {\em principal} if $\theta_3$ is the trivial permutation on $\mathbb{F}_q$. If this is the case, then $L$ and $L^\Theta$ are said to be {\em principal isotopic}. Furthermore, if $\theta_1=\theta_2=\theta_3$, then we get an {\em isomorphism} from $L$ to $L^\Theta$, which are then said to be {\em isomorphic}.  Being (principal) isotopic or isomorphic are equivalence relations among Latin squares.  Finally, if $L^{\Theta}=L$, then $\Theta$ is called an {\em autotopism} of $L$ (an {\em automorphism} if $\theta_1=\theta_2=\theta_3$). The set of autotopisms of $L$ has a group structure with the composition of permutations. All these notions are readily translated to quasigroups (see \cite{Albert1943}) and hence to LPPs. Section \ref{sec:symmetries} focuses on this last aspect.
 
\section{Algebraic sets describing PPs and LPPs}\label{sec:description}

This section deals with Problem \ref{problem_algebraic set}. More precisely, we show how both sets $\mathcal{P}_q$ and $\mathcal{LP}_q$, with $q>3$ a prime power, are identified with the algebraic sets of a pair of ideals over $\mathbb{F}_q$. (The case $q\in \{2,3\}$ was already considered by Mullen \cite{Mullen1980}.) The keystone here is the ideal associated with $\mathcal{P}_q$. It is due to the fact that every polynomial $f\in\mathcal{LP}_q$ is uniquely determined by the set of $q^2$ polynomials $\{f(x,c),\,f(c,x)\colon\,c\in\mathbb{F}_q\}\subseteq \mathcal{P}_q$. Thus, the ideal associated with $\mathcal{LP}_q$ has to be related to that associated with $\mathcal{P}_q$.

Let us consider the sets of variables
\[X_q:=\left\{x_i\colon\, i\in \{0,\ldots q-2\}\right\} \hspace{1cm}\text{and}\hspace{1cm} \overline{X}_q:=\left\{x_{ij}\colon\, i,j\in \{0,\ldots q-2\}\right\}.\]
In what follows, we show how the set $\mathcal{P}_q$ (respectively, $\mathcal{LP}_q$) is identified with the algebraic set of an ideal in $\mathbb{F}_q[X_q]$ (respectively, $\mathbb{F}_q[\overline{X}_q]$). To this end, we conveniently use the algebraic approach described by Bayer \cite{Bayer1982} (see also \cite{Adams1994}) for solving the graph coloring problem, which has later been used in the literature \cite{Falcon2007, Falcon2013, Falcon2015, Falcon2018, Falcon2020} for enumerating and classifying Latin squares and some related structures. More precisely, we define the function
\[F(x):=x^{q-1}-1.\]
Note here that every zero of the equation $F(x)=0$ belongs to $\mathbb{F}_q\setminus\{0\}$, and hence the equation $F(x-y)=0$ implies $x\neq y$. Then, for each $a,b,c\in \mathbb{F}_q$, with $a\neq b$, we define the following polynomials.
\[F_{a,b}:=F\left(\sum_{i=0}^{q-2}(a^i-b^i)x_i\right)\in \mathbb{F}_q[X_q]\]
\[G^{\mathrm{row}}_{a,b,c}:=F\left(\sum_{i,j=0}^{q-2}(a^i-b^i)c^jx_{ji}\right)\in \mathbb{F}_q[\overline{X}_q]\]
\[G^{\mathrm{col}}_{a,b,c}:=F\left(\sum_{i,j=0}^{q-2}(a^i-b^i)c^jx_{ij}\right)\in \mathbb{F}_q[\overline{X}_q]\]
Note that the polynomial $G^{\mathrm{row}}_{a,b,c}$ (respectively, $G^{\mathrm{col}}_{a,b,c}$) coincides with $F_{a,b}$ once each variable $x_i$ is replaced by the polynomial $\sum_{j=0}^{q-2}c^jx_{ji}$ (respectively,  $\sum_{j=0}^{q-2}c^jx_{ij}$). This fact will play a relevant role in Remark \ref{remak_affine} to make our algebraic approach more efficient.

\begin{lemma}\label{lemma_affine} The next statements hold.
\begin{enumerate}
    \item[a)] The set $\mathcal{P}_q$ is identified with the algebraic set of the following zero-dimensional radical ideal in $\mathbb{F}_q[X_q]$.
\[\mathcal{I}_q:=\left\langle\,F_{a,b}\colon\, a,b\in\mathbb{F}_q,\, a\neq b\,\right\rangle + \langle\,\,x_i ^q-x_i\colon\, i\in\{0,\ldots,q-2\}\rangle\] 
As a consequence, $|\mathcal{P}_q|=|\mathcal{Z}(\mathcal{I}_q)|=\dim_{\mathbb{F}_q}(\mathbb{F}_q[X_q]/\mathcal{I}_q)$.

\vspace{0.1cm}

    \item[b)] The set $\mathcal{LP}_q$ is identified with the algebraic set of the following zero-dimensional radical ideal in $\mathbb{F}_q[\overline{X}_q]$.
\[\mathcal{J}_q:= \left\langle\,G^{\mathrm{row}}_{a,b,c},\, G^{\mathrm{col}}_{a,b,c} \colon\, a,b,c\in\mathbb{F}_q,\, a\neq b\,\right\rangle+ \langle\,\,x_{ij} ^q-x_{ij}\colon\, i,j\in\{0,\ldots,q-2\}\rangle\]
As a consequence, $|\mathcal{LP}_q|=|\mathcal{Z}(\mathcal{J}_q)|=\dim_{\mathbb{F}_q}(\mathbb{F}_q[\overline{X}_q]/\mathcal{J}_q)$.
\end{enumerate}
\end{lemma}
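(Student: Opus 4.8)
The plan is to prove both parts from the same template, treating the bijection and the counting as two independent tasks. For each part I would first exhibit a set-theoretic bijection between the polynomial family ($\mathcal{P}_q$, resp. $\mathcal{LP}_q$) and the algebraic set, and then verify that the defining ideal is zero-dimensional and radical, so that the quoted result \cite[Theorem 3.7.19]{Kreuzer2000} converts the size of the algebraic set into the dimension of the quotient ring. Throughout I assume $q>3$, as in the surrounding discussion, so that every PP has degree $\leq q-2$: since $N_q(d)=0$ whenever $d\mid q-1$, in particular no PP has degree $q-1$, which is exactly what legitimizes using only the variables indexed by $\{0,\ldots,q-2\}$.

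For the bijection in part a) I would identify $\sum_{i=0}^{q-2}s_ix^i$ with its coefficient tuple $(s_0,\ldots,s_{q-2})\in\mathbb{F}_q^{q-1}$. The crucial translation is that a map on a finite set is a permutation iff it is injective, so $f\in\mathcal{P}_q$ iff $f(a)\neq f(b)$ for all $a\neq b$; since $\sum_i s_i(a^i-b^i)\in\mathbb{F}_q$, its nonvanishing is equivalent to $z^{q-1}=1$, that is, to $F(z)=0$. Hence $f\in\mathcal{P}_q$ iff $F_{a,b}(s_0,\ldots,s_{q-2})=0$ for every pair $a\neq b$, while the field equations $x_i^q-x_i$ cut out precisely the tuples with entries in $\mathbb{F}_q$. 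Thus $\mathcal{Z}(\mathcal{I}_q)$ is exactly the set of coefficient tuples of PPs, and the correspondence is bijective because coefficients are unique. For part b) I would first justify the restriction to degree $\leq q-2$ in each variable: if $f(x,y)$ is an LPP, the coefficient of $x^{q-1}$ is a polynomial in $y$ of degree $\leq q-1$ that vanishes at every $c\in\mathbb{F}_q$ (because each $f(x,c)$ is a PP, hence of $x$-degree $\leq q-2$), and a nonzero polynomial of degree $\leq q-1$ cannot have $q$ roots, so that coefficient is identically zero; symmetrically for $y^{q-1}$. Hence $f=\sum_{i,j=0}^{q-2}s_{ij}x^iy^j$ and the variable set $\overline{X}_q$ suffices. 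Then I would invoke the substitution observation preceding the lemma: replacing $x_i$ by $\sum_j c^j x_{ji}$ (resp.\ $\sum_j c^j x_{ij}$) turns the requirement that $f(c,y)$ (resp.\ $f(x,c)$) be a PP into exactly $G^{\mathrm{row}}_{a,b,c}=0$ (resp.\ $G^{\mathrm{col}}_{a,b,c}=0$) for all $a\neq b$. Letting $c$ range over $\mathbb{F}_q$ encodes that all row and all column polynomials are PPs, which is the definition of an LPP, so together with the field equations this gives the bijection $\mathcal{LP}_q\cong\mathcal{Z}(\mathcal{J}_q)$.

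The counting claim is identical for both parts and is the routine, standard step. Both $\mathcal{I}_q$ and $\mathcal{J}_q$ contain, for each variable, the field equation $x_i^q-x_i$ (resp.\ $x_{ij}^q-x_{ij}$). Containing a univariate polynomial in every variable forces the quotient to be a finite-dimensional $\mathbb{F}_q$-vector space, hence the ideal is zero-dimensional. Moreover each such polynomial factors as $\prod_{c\in\mathbb{F}_q}(x_i-c)$ and is therefore squarefree, so by Seidenberg's lemma (a zero-dimensional ideal containing a squarefree univariate polynomial in each variable over a perfect field is radical) both ideals are radical. Applying \cite[Theorem 3.7.19]{Kreuzer2000} then gives $|\mathcal{Z}(\mathcal{I}_q)|=\dim_{\mathbb{F}_q}(\mathbb{F}_q[X_q]/\mathcal{I}_q)$ and likewise for $\mathcal{J}_q$, and combining with the bijections above yields the stated equalities with $|\mathcal{P}_q|$ and $|\mathcal{LP}_q|$.

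I expect the main difficulty to be bookkeeping rather than conceptual. The genuinely delicate points are (i) the degree-reduction argument that no LPP carries an $x^{q-1}$ or $y^{q-1}$ term, which relies on the $d\mid q-1\Rightarrow N_q(d)=0$ fact, and (ii) keeping the row/column indexing straight so that the substitution $x_i\mapsto\sum_j c^j x_{ji}$ really reproduces $G^{\mathrm{row}}$ while the transposed substitution reproduces $G^{\mathrm{col}}$, where one must be careful that ``row'' corresponds to fixing the first argument and ``column'' to fixing the second. The algebraic-geometry input is standard and reduces to invoking Seidenberg's lemma together with the quoted dimension theorem, so it poses no real obstacle.
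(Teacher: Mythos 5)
Your proposal is correct and follows essentially the same route as the paper: coefficient tuples are matched with zeros of the ideal, the polynomials $F_{a,b}$ (resp.\ $G^{\mathrm{row}}_{a,b,c}$, $G^{\mathrm{col}}_{a,b,c}$) encode injectivity of the map (resp.\ of each row and column map), the field equations give zero-dimensionality, Seidenberg's lemma gives radicality, and the quoted theorem of Kreuzer--Robbiano converts $|\mathcal{Z}(\cdot)|$ into the quotient dimension. Your only additions are making explicit two points the paper leaves implicit --- the interpolation argument showing an LPP has degree at most $q-2$ in each variable, so the variable set $\overline{X}_q$ suffices, and the verification that the substitution $x_i\mapsto\sum_j c^j x_{ji}$ reproduces $G^{\mathrm{row}}_{a,b,c}$ --- both of which are correct and consistent with the paper's remark preceding the lemma.
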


\begin{proof} Each zero $S:=(s_0,\ldots,s_{q-2})\in \mathcal{Z}(\mathcal{I}_q)$ is uniquely identified with the polynomial 
\[f_S:=\sum_{i=0}^{q-2}s_ix^i \in \mathbb{F}_q[x].\]
We claim $f_S\in\mathcal{P}_q$ because, for each pair of elements $a,b\in \mathbb{F}_q$, with $a\neq b$, the polynomial $F_{a,b}$ implies that $f_S(a)\neq f_S(b)$. Since no polynomial in $\mathcal{P}_q$ has degree a divisor of $q-1$ (see \cite{Lidl1997}), the first statement holds.

Similarly, each zero $\overline{S}\in \mathcal{Z}\left(\mathcal{J}_q\right)$ is uniquely identified with the polynomial 
\[f_{\overline{S}}:=\sum_{i,j=0}^{q-2}s_{ij}x^iy^j\in \mathbb{F}_q[x,y],\]
where each $s_{ij}$ is the component of $\overline{S}$ related to the variable $x_{ij}$. The second statement holds readily from (a) once we observe that $G^{\mathrm{row}}_{a,b,c}=0$ (respectively, $G^{\mathrm{col}}_{a,b,c}=0$) implies that $f_{\overline{S}}(c,x)\in \mathcal{P}_q$ (respectively, $f_{\overline{S}}(x,c)\in \mathcal{P}_q$) for all $c\in \mathbb{F}_q$.

The respective consequences hold from the fact that both ideals are zero-dimensional and radical. They are zero-dimensional because of the presence of all the generators $x_i^q-x$ in $\mathcal{I}_q$, and $x_{ij}^q-x_{ij}$ in $\mathcal{J}_q$, which make their respective zeros to take values in $\mathbb{F}_q$. They are radical from Seidenberg's Lemma (see \cite[Proposition 3.7.15]{Kreuzer2000}). 
\end{proof}

\vspace{0.2cm}

From now on, we denote by $\mathcal{LP}_{q,d}$ the subset of polynomials of total degree $d$ in $\mathcal{LP}_q$. The following result shows how the ideals described in Lemma \ref{lemma_affine} can be conveniently modified to identify both subsets $\mathcal{P}_{q,d}$ and $\mathcal{LP}_{q,d}$ with the algebraic sets of a pair of ideals over $\mathbb{F}_q$. Its proof is similar to that of the aforementioned lemma.

\begin{lemma}\label{lemma_affine_degrees} The next statements hold.
\begin{enumerate}
    \item[a)] The set $\mathcal{P}_{q,d}$, with $d\leq q-2$, is identified with the algebraic set of the following zero-dimensional radical ideal in $\mathbb{F}_q[X_q]$.
\[\mathcal{I}_{q,d}:=\mathcal{I}_q + \left\langle\,x_i\colon\, i>d\,\right\rangle + \left\langle\,x_d^{q-1}-1\,\right\rangle\] 
In particular, $N_q(d)=|\mathcal{Z}(I_{q,d})|=\dim_{\mathbb{F}_q}(\mathbb{F}_q[X_q]/I_{q,d})$.

\vspace{0.2cm}

    \item[b)] The set $\mathcal{LP}_{q,d}$ is identified with the algebraic set of the following zero-dimensional radical ideal in $\mathbb{F}_q[\overline{X}_q]$.
\[\mathcal{J}_{q,d}:= \mathcal{J}_q + \left\langle\,x_{ij}\colon\, i+j>d\,\right\rangle + \left\langle\,\prod_{i+j=d}(x_{ij}^{q-1}-1)\,\right\rangle\]
As a consequence, $|\mathcal{LP}_{q,d}|=|\mathcal{Z}(\mathcal{J}_{q,d})|=\dim_{\mathbb{F}_q}(\mathbb{F}_q[\overline{X}_q]/\mathcal{J}_{q,d})$.
\end{enumerate}
\end{lemma}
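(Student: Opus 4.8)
The plan is to build directly on Lemma~\ref{lemma_affine}, whose proof already establishes the bijection between $\mathcal{Z}(\mathcal{I}_q)$ and $\mathcal{P}_q$, and between $\mathcal{Z}(\mathcal{J}_q)$ and $\mathcal{LP}_q$, together with the zero-dimensionality and radicality of both ideals. The new ingredient in each part is a degree constraint, so my strategy is to show that the extra generators we add cut out exactly the subset of zeros whose associated polynomial has the prescribed total degree, and then verify that the enlarged ideals remain zero-dimensional and radical so that the cardinality formula $|\mathcal{Z}|=\dim_{\mathbb{F}_q}(\mathbb{F}_q[\cdot]/\cdot)$ continues to hold.

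For part (a), I would argue as follows. A zero $S=(s_0,\ldots,s_{q-2})\in\mathcal{Z}(\mathcal{I}_q)$ corresponds to $f_S=\sum_{i=0}^{q-2}s_ix^i$, and the polynomial $f_S$ has degree exactly $d$ precisely when $s_i=0$ for all $i>d$ and $s_d\neq 0$. The generators $x_i$ for $i>d$ force the first condition. For the second, since $s_d\in\mathbb{F}_q$ (guaranteed by the generators $x_i^q-x_i$ already present in $\mathcal{I}_q$), the condition $s_d\neq 0$ is equivalent to $s_d^{\,q-1}=1$, which is exactly what the generator $x_d^{q-1}-1$ encodes. Hence $\mathcal{Z}(\mathcal{I}_{q,d})$ is identified with $\mathcal{P}_{q,d}$. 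Zero-dimensionality is inherited since $\mathcal{I}_{q,d}\supseteq\mathcal{I}_q$, and radicality follows again from Seidenberg's Lemma, because the relevant univariate generators (the $x_i^q-x_i$, and the squarefree $x_d^{q-1}-1$) are separable. The cardinality formula then gives $N_q(d)=|\mathcal{Z}(\mathcal{I}_{q,d})|=\dim_{\mathbb{F}_q}(\mathbb{F}_q[X_q]/\mathcal{I}_{q,d})$.

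For part (b), the structure is the same but the \emph{total}-degree constraint requires more care, and this is the step I expect to be the main obstacle. A zero $\overline{S}$ corresponds to $f_{\overline{S}}=\sum_{i,j}s_{ij}x^iy^j$, whose total degree is $\max\{i+j: s_{ij}\neq 0\}$. The generators $x_{ij}$ with $i+j>d$ kill all monomials of total degree exceeding $d$, so they guarantee $\deg f_{\overline{S}}\leq d$. The subtlety is enforcing that the total degree is \emph{exactly} $d$: unlike the univariate case there is no single leading coefficient, but rather the whole collection $\{s_{ij}: i+j=d\}$ along the anti-diagonal, and total degree $d$ holds iff at least one of these is nonzero. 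Over $\mathbb{F}_q$, the condition ``$s_{ij}\neq 0$'' is $s_{ij}^{\,q-1}-1=0$, so ``some $s_{ij}$ with $i+j=d$ is nonzero'' translates to the vanishing of the product $\prod_{i+j=d}(x_{ij}^{q-1}-1)$; this product is zero exactly when at least one factor vanishes, which is what we want. Thus $\mathcal{Z}(\mathcal{J}_{q,d})$ is identified with $\mathcal{LP}_{q,d}$, and the consequence follows once zero-dimensionality and radicality are confirmed as in part (a). The one point warranting explicit attention is the radicality of $\mathcal{J}_{q,d}$: the added product generator need not be squarefree as a multivariate polynomial, so I would lean on the fact that $\mathcal{J}_{q,d}$ still contains all the separable generators $x_{ij}^q-x_{ij}$, and invoke the version of Seidenberg's Lemma that concludes radicality whenever the ideal contains, for each variable, a univariate squarefree polynomial in that variable, independent of the remaining generators.
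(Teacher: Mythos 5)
Your proof is correct and takes essentially the same approach the paper intends: the paper offers no separate argument for this lemma, saying only that its proof is ``similar to that of Lemma~\ref{lemma_affine}'', and your write-up is exactly that similarity made explicit --- zeros identified with coefficient vectors, the added generators $x_i$ (resp.\ $x_{ij}$ with $i+j>d$) and $x_d^{q-1}-1$ (resp.\ the anti-diagonal product $\prod_{i+j=d}(x_{ij}^{q-1}-1)$) cutting out degree exactly $d$, zero-dimensionality from the field equations, and radicality from Seidenberg's Lemma. Your closing concern about the product generator is resolved correctly: Seidenberg's Lemma only requires a squarefree univariate polynomial in each variable, and the generators $x_{ij}^q-x_{ij}$ already inherited from $\mathcal{J}_q$ supply these, so the non-squarefreeness of the added generator is irrelevant.
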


\vspace{0.2cm}

\begin{remark}\label{remak_affine} The reduced Gr\"obner basis of $\mathcal{I}_q$ in Lemma \ref{lemma_affine} establishes some constraints that must be satisfied by the coefficients of every polynomial in $\mathcal{P}_q$. We can use these constraints to compute much more efficiently the reduced Gr\"obner basis of the ideal $\mathcal{J}_q$ of the aforementioned lemma. To this end, note that, for each $c\in\mathbb{F}_q$, the polynomials 
\[f_{\overline{S}}(x,c)=\sum_{i=0}^{q-2}\left(\sum_{j=0}^{q-2}s_{ij}c^j\right)x^i \hspace{1cm} \text{ and } \hspace{1cm} f_{\overline{S}}(c,y)=\sum_{i=0}^{q-2}\left(\sum_{j=0}^{q-2}s_{ji}c^j\right)y^i\]
are PPs. In practice, for each $c\in \mathbb{F}_q$, we remove from $\mathcal{J}_q$ both sets of generators
\[\left\{G^{\mathrm{row}}_{a,b,c}\colon\, a,b\in \mathbb{F}_q,\,a\neq b\right\}\hspace{1cm} \text{ and } \hspace{1cm}\left\{G^{\mathrm{col}}_{a,b,c}\colon\, a,b\in \mathbb{F}_q,\,a\neq b\right\}.\] 
Instead of the first set (respectively, the second one), we include the reduced Gr\"obner basis of the ideal $\mathcal{I}_q$ by replacing each variable $x_i$ in that basis by the expression 
\[\sum_{j=0}^{q-2} x_{ij}c^j \hspace{1cm} \text{(respectively, } \hspace{1cm} \sum_{j=0}^{q-2} x_{ji}c^j).\] (Recall here that each element $s_{ij}$ in $f_{\overline{S}}$ is associated to the variable $x_{ij}$.) In particular, for $c=0$, we replace each variable $x_i$ by the variable $x_{i0}$ (respectively, $x_{0i}$). This gives rise to a new zero-dimensional radical ideal in $\mathbb{F}_q[\overline{X}_q]$ whose algebraic set coincides with that of $\mathcal{J}_q$. Calculating the reduced Gr\"obner basis of this new ideal is much more efficient than calculating that of $\mathcal{J}_q$, because it avoids repeating computations.
\end{remark}

\vspace{0.2cm}

We illustrate this approach by characterizing all polynomials in $\mathcal{P}_4$ and $\mathcal{P}_5$. This characterization is consistent with statements (D2) and (D3) in the introductory section. Then we decompose $\mathcal{LP}_4$ into two disjoint subsets that, as we prove in Section \ref{sec:symmetries}, separately describe the two isotopism classes of Latin squares of order four.

\begin{proposition}\label{proposition_P4} It is verified that
\[\mathcal{P}_4=\mathcal{P}_{4,1}\sqcup \mathcal{P}_{4,2}=\bigcup_{s_0,s_1,s_2\in\mathbb{F}_4}\{s_2x^2+s_1x+s_0\colon\,s_1^3+s_2^3=1\},\]
where
\[\mathcal{P}_{4,1}=\bigcup_{s_0,s_1\in\mathbb{F}_4}\{s_1x+s_0\colon\, s_1^3=1\} \hspace{1cm}\text{and}\hspace{1cm}
\mathcal{P}_{4,2}=\bigcup_{s_0,s_2\in\mathbb{F}_4}\{s_2x^2+s_0\colon\, s_2^3=1\}.\]
\end{proposition}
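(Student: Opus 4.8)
The plan is to characterize $\mathcal{P}_4$ directly by exploiting the low order $q=4$, where $\mathbb{F}_4=\{0,1,\alpha,\alpha^2\}$ with $\alpha^2=\alpha+1$ and the upper bound for the degree of a polynomial in $\mathcal{P}_4$ is $q-2=2$. Thus every polynomial in $\mathcal{P}_4$ has the form $s_2x^2+s_1x+s_0$ with $s_0,s_1,s_2\in\mathbb{F}_4$, and the task reduces to determining exactly which coefficient triples yield a permutation. The cleanest route is to apply Lemma \ref{lemma_affine_degrees}(a) and compute the reduced Gr\"obner basis of the ideal $\mathcal{I}_4$ in $\mathbb{F}_4[x_0,x_1,x_2]$, reading off the single defining constraint $s_1^3+s_2^3=1$. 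Since $s^3\in\{0,1\}$ for all $s\in\mathbb{F}_4$ (the nonzero elements form a cyclic group of order three, so $s^3=1$ iff $s\neq 0$, and $0^3=0$), this constraint says precisely that exactly one of $s_1,s_2$ is nonzero, which immediately forces the disjoint decomposition into $\mathcal{P}_{4,1}$ (where $s_1\neq 0$, $s_2=0$) and $\mathcal{P}_{4,2}$ (where $s_2\neq 0$, $s_1=0$).

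\medskip

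First I would establish the degree bound: by the results quoted after Problem \ref{problem_algebraic set}, $N_q(d)=0$ whenever $d\mid q-1$, so for $q=4$ no permutation polynomial has degree $3$ (since $3\mid 3$), confirming $\deg\leq 2$. Next I would write the constant coefficient $s_0$ is free, since adding a constant to a permutation is still a permutation; this accounts for the unconstrained $s_0$ in every piece. The substance is then the degree-$\leq 2$ analysis of $s_2x^2+s_1x$. For the linear case $s_2=0$: $s_1x+s_0$ is a permutation iff $s_1\neq 0$, i.e. $s_1^3=1$, giving $\mathcal{P}_{4,1}$. For the genuinely quadratic case $s_2\neq 0$: invoking statement (D2), which asserts $s_2x^2+s_1x+s_0\in\mathcal{P}_q$ with $s_2\neq 0$ holds iff $s_1=0$ and $\mathrm{char}(\mathbb{F}_q)=2$, and since $\mathrm{char}(\mathbb{F}_4)=2$ this yields exactly $s_1=0$, $s_2\neq 0$, i.e. $\mathcal{P}_{4,2}$ with constraint $s_2^3=1$. (That $x\mapsto x^2$ is the Frobenius automorphism of $\mathbb{F}_4$ gives an independent direct verification.)

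\medskip

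To assemble the global description, I would combine the two cases and observe that they are \emph{disjoint} because $\mathcal{P}_{4,1}$ requires $s_2=0$ while $\mathcal{P}_{4,2}$ requires $s_2\neq 0$; hence the $\sqcup$. The unified single constraint $s_1^3+s_2^3=1$ then follows because, over $\mathbb{F}_4$, $s^3$ is the indicator of $s\neq 0$, so $s_1^3+s_2^3=1$ in $\mathbb{F}_2\subseteq\mathbb{F}_4$ holds exactly when precisely one of $s_1,s_2$ is nonzero — which is the union of the two cases. This also shows no polynomial with both $s_1,s_2$ nonzero or both zero can be a permutation, matching the forbidden degree-$3$ effective map and the non-permutation $s_2x^2+s_1x$ with $s_1s_2\neq 0$.

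\medskip

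\textbf{The main obstacle} is less a conceptual difficulty than a matter of rigor and presentation: one must justify reducing the search to $\deg\leq 2$ and then precisely pin down the quadratic case. If I lean on the quoted statements (D2) and the degree-divisibility vanishing $N_q(d)=0$ for $d\mid q-1$, the argument is short and transparent; the only care needed is verifying the arithmetic identity $s^3=[s\neq 0]$ in $\mathbb{F}_4$ and checking that $s_1^3+s_2^3=1$ genuinely encodes the disjoint union rather than some larger set. Alternatively, if I prefer a self-contained proof independent of (D2), the harder step is ruling out all mixed quadratics $s_2x^2+s_1x$ with $s_1,s_2\neq 0$ as non-permutations; this amounts to a finite check over the four field elements (or noting such a map has a repeated value), which is routine but must be stated cleanly. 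I expect the Gr\"obner-basis computation via Lemma \ref{lemma_affine_degrees} to be the most systematic and least error-prone path, with the field-theoretic verification serving as a confirming cross-check.
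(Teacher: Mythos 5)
Your proof is correct, but it takes a genuinely different route from the paper's. The paper proves Proposition \ref{proposition_P4} by actually carrying out the Gr\"obner-basis computation you only gesture at: it computes the reduced Gr\"obner basis of $\mathcal{I}_4$ from Lemma \ref{lemma_affine} with respect to the degree inverse lexicographical order, obtaining $\{x_1^3+x_1^2x_2+x_1x_2^2+x_2^3+1,\ x_1x_2,\ x_2^4+x_2\}$, identifies this ideal with $\langle\, x_1^3+x_2^3-1,\ x_1x_2\,\rangle$, and then observes that the algebraic set is unchanged if the generator $x_1x_2$ is dropped, whence $\mathcal{P}_4\cong\{(s_0,s_1,s_2)\in\mathbb{F}_4^3\colon s_1^3+s_2^3=1\}$. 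Your argument is instead elementary and machine-free: degree $3$ is excluded because $3$ divides $q-1$ (note that the paper's blanket statement ``$N_q(d)=0$ whenever $d$ divides $q-1$'' needs the caveat $d>1$, but you invoke it only for $d=3$, so no harm), the linear case is immediate, the quadratic case is settled by Mollin--Small's statement (D2) together with $\mathrm{char}(\mathbb{F}_4)=2$ (with the Frobenius map as a direct check), and the identity $s^3=1$ for $s\neq 0$ versus $0^3=0$ packages the two disjoint cases into the single constraint $s_1^3+s_2^3=1$. That last observation is in fact precisely the reason the paper may discard the generator $x_1x_2$: on the zero set of $x_1^3+x_2^3-1$, exactly one of $s_1,s_2$ is nonzero, so $s_1s_2=0$ holds automatically, and your elementary analysis makes this explicit where the paper leaves it implicit. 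What each approach buys: the paper's computation is an instance of the general procedure (Lemma \ref{lemma_affine} and Remark \ref{remak_affine}) that the paper is advertising, and it scales uniformly to the later computations for $\mathcal{P}_5$, $\mathcal{LP}_4$, $\mathcal{SLP}_4$ and $\mathcal{RLP}_4$; your derivation is self-contained and hand-verifiable, and it effectively turns the paper's after-the-fact remark that the characterization ``is consistent with statements (D2) and (D3)'' into an actual proof from (D2).
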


\begin{proof} The reduced Gr\"obner basis of the ideal $I_4\subset \mathbb{F}_4[X_4]$ described in Lemma \ref{lemma_affine}, with respect to the degree inverse lexicographical order, is the set
\begin{equation}\label{equation_G4}
\left\{x_1^3 + x_1^2 x_2 + x_1x_2^2 + x_2^3 + 1,\hspace{0.3cm}x_1x_2,\hspace{0.3cm} x_2^4+x_2\right\}.
\end{equation}
This is also the reduced Gr\"obner basis of the ideal $\langle\, x_1^3+x_2^3-1,\hspace{0.3cm}x_1x_2\,\rangle$, which therefore coincides with $I_4$. Since the algebraic set of this last ideal coincides with that of the ideal generated only by the polynomial $x_1^3+x_2^3-1$, Lemma \ref{lemma_affine} implies that
\[\mathcal{P}_4\cong\mathcal{Z}(I_4)=\left\{(s_0,s_1,s_2)\in\mathbb{F}_4^3\colon\,s_1^3+s_2^3=1\right\}.\]
The characterization of the polynomials in both subsets $\mathcal{P}_{4,1}$ and $\mathcal{P}_{4,2}$ follows readily.
\end{proof}

\vspace{0.2cm}

\begin{proposition}\label{proposition_P5} It is verified that
\[\mathcal{P}_5=\mathcal{P}_{5,1}\sqcup \mathcal{P}_{5,3}=\bigcup_{s_0,s_1,s_2,s_3\in\mathbb{F}_5}\{s_3x^3+s_2x^2+s_1x+s_0\colon\, (s_1+s_3)^4=1,\, s_2^2=3s_1s_3\},\]
with
\[\mathcal{P}_{5,1}=\bigcup_{s_0,s_1\in\mathbb{F}_5}\{s_1x+s_0\colon\, s_1^4=1\}\]
and
\[\mathcal{P}_{5,3}=\bigcup_{s_0,s_1,s_2,s_3\in\mathbb{F}_5}\{s_3x^3+s_2x^2+s_1x+s_0\colon\, s_3^4=1,\, s_2^2=3s_1s_3\}.\]
\end{proposition}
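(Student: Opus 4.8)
The plan is to follow the same template as the proof of Proposition \ref{proposition_P4}, now working in the ring $\mathbb{F}_5[X_5]=\mathbb{F}_5[x_0,x_1,x_2,x_3]$. By Lemma \ref{lemma_affine}(a) it suffices to describe the algebraic set $\mathcal{Z}(\mathcal{I}_5)$, after which the identification $\mathcal{P}_5\cong\mathcal{Z}(\mathcal{I}_5)$ sends each zero $(s_0,s_1,s_2,s_3)$ to the polynomial $s_3x^3+s_2x^2+s_1x+s_0$. First I would record that the variable $x_0$ is free: in every generator $F_{a,b}$ the coefficient of $x_0$ equals $a^0-b^0=0$, so $x_0$ enters $\mathcal{I}_5$ only through the field equation $x_0^5-x_0$ and ranges over all of $\mathbb{F}_5$. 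This matches the fact that $s_0$ is an arbitrary additive constant and reduces the problem to pinning down the constraints relating $s_1,s_2,s_3$.

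Next I would compute the reduced Gr\"obner basis of $\mathcal{I}_5$ with the library \emph{lpp.lib} in \textsc{Singular}, exactly as was done for $\mathcal{I}_4$ in the proof of Proposition \ref{proposition_P4}. I expect this computation to exhibit $\mathcal{I}_5$ as an ideal whose $\mathbb{F}_5$-zeros are cut out by the two constraints $s_2^2=3s_1s_3$ and $(s_1+s_3)^4=1$, the remaining generators being field equations; equivalently, the algebraic set $\mathcal{Z}(\mathcal{I}_5)$ over $\mathbb{F}_5$ coincides with that of $\langle\,x_2^2-3x_1x_3,\ (x_1+x_3)^4-1\,\rangle$, in the same spirit in which the proof of Proposition \ref{proposition_P4} discards the generator $x_1x_2$ at the level of algebraic sets. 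Since $\mathcal{I}_5$ is zero-dimensional and radical, Lemma \ref{lemma_affine}(a) then yields
\[\mathcal{P}_5\cong\mathcal{Z}(\mathcal{I}_5)=\left\{(s_0,s_1,s_2,s_3)\in\mathbb{F}_5^4\colon\, (s_1+s_3)^4=1,\ s_2^2=3s_1s_3\right\}.\]

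It then remains to stratify this set by the degree of the associated polynomial and to check agreement with statements (D2) and (D3). If $s_3=0$, then $s_2^2=3s_1s_3=0$ forces $s_2=0$ (consistent with (D2), which excludes degree two because $\mathrm{char}(\mathbb{F}_5)\neq 2$), while $(s_1)^4=1$ forces $s_1\neq 0$; this is precisely $\mathcal{P}_{5,1}$. If $s_3\neq 0$, then $s_3^4=1$, the polynomial has degree three, and the surviving constraint $s_2^2=3s_1s_3$ is exactly the condition of (D3) for $q=5\equiv 2\,(\mathrm{mod}\,3)$; this describes $\mathcal{P}_{5,3}$.

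The main obstacle I anticipate is reconciling the unified constraint $(s_1+s_3)^4=1$ with these two separate descriptions, i.e., confirming that the degree-three stratum is genuinely cut out by $s_3^4=1$ and $s_2^2=3s_1s_3$ alone. The point is that the factor $(s_1+s_3)^4=1$ is automatic once $s_3\neq 0$: were it to fail we would have $s_1+s_3=0$, hence $s_1=-s_3$ and $s_2^2=3(-s_3)s_3=2s_3^2$; but for $s_3\in\{1,2,3,4\}$ one has $s_3^2\in\{1,4\}$, so $2s_3^2\in\{2,3\}$, and neither $2$ nor $3$ is a square in $\mathbb{F}_5$. Thus $s_2^2=3s_1s_3$ with $s_3\neq 0$ already forces $s_1+s_3\neq 0$, so the disjoint union $\mathcal{P}_{5,1}\sqcup\mathcal{P}_{5,3}$ coincides with the unified description. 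This short quadratic-residue argument, rather than the automated Gr\"obner computation, is the only genuinely non-routine step.
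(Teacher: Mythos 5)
Your proposal is correct and takes essentially the same route as the paper: both rest on the {\sc Singular}/Gr\"obner computation of the reduced basis of $\mathcal{I}_5$ via Lemma \ref{lemma_affine}(a), reducing at the level of algebraic sets to the two constraints $(s_1+s_3)^4=1$ and $s_2^2=3s_1s_3$ (the paper writes the latter generator as $x_2^2+2x_1x_3$, the same polynomial over $\mathbb{F}_5$). Your closing quadratic-residue argument showing that $s_1+s_3\neq 0$ is automatic once $s_3\neq 0$ and $s_2^2=3s_1s_3$ is correct and simply makes explicit the degree stratification that the paper compresses into ``the result follows readily.''
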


\begin{proof} The reduced Gr\"obner basis of the ideal $I_5\subset \mathbb{F}_5[X_5]$ described in Lemma \ref{lemma_affine}, with respect to the degree inverse lexicographical order, is the set
\[\left\{(x_1+x_3)^4-1,\hspace{0.3cm}
x_2^2+2x_1x_3,\hspace{0.3cm}
x_2(x_1^2+x_3^2),\hspace{0.3cm}
x_3(x_1^3+x_1x_3^2),\hspace{0.3cm}
x_3^5-x_3,\hspace{0.3cm}
x_2(x_3^4-1)\right\}.\]
This is also the reduced Gr\"obner basis of the ideal $\langle\, (x_1+x_3)^4-1,\hspace{0.3cm}x_2^2+2x_1x_3,\hspace{0.3cm} x_2(x_1^2+x_3^2)\,\rangle$, which therefore coincides with $I_5$. The result follows readily from Lemma \ref{lemma_affine} once we observe that the algebraic set of this last ideal coincides with that of the ideal $\langle\, (x_1+x_3)^4-1,\hspace{0.3cm}x_2^2+2x_1x_3\,\rangle$.
\end{proof}

\vspace{0.2cm}

\begin{theorem}\label{theorem_LP4} The set $\mathcal{LP}_4$ is decomposed into the following two disjoint subsets.
\begin{itemize}
    \item The subset $\mathcal{LP}_{4,1}\sqcup \mathcal{LP}_{4,2}$, which is formed by 144 LPPs with separate variables of the form
\begin{equation}\label{eq_S1}
s_{20}x^2+s_{02}y^2+s_{10}x+s_{01}y+s_{00}\in\mathbb{F}_4[x,y]
\end{equation}
where
\[s_{01}^3+s_{02}^3 = s_{10}^3+s_{20}^3 =1.\]

\item The subset $\mathcal{LP}_{4,4}$, which is formed by 432 polynomials of the form
\begin{equation}\label{eq_LP4}
\begin{split}
\left(s_{22}xy+s_{21}x+s_{12}y+s_{12}s_{21}s_{22}^2\right)xy+s_{20}x^2+s_{02}y^2+\\ +(s_{20}s_{22}+s_{21}^2)s_{12}s_{22}x+(s_{02}s_{22}+s_{12}^2)s_{21}s_{22}y+s_{00}\in\mathbb{F}_4[x,y]    
\end{split}
\end{equation}
where
\begin{equation}\label{eq_LP4_constraints}
\begin{cases}
s_{22}^3=s_{21}^3=s_{12}^3=1,\\
s_{20}(s_{20}s_{22}+s_{21}^2)=0,\\
s_{02}(s_{02}s_{22}+s_{12}^2)=0.
\end{cases}
\end{equation}
\end{itemize}
\end{theorem}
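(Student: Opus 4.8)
The plan is to apply Lemma \ref{lemma_affine}(b) directly: since $\mathcal{LP}_4\cong\mathcal{Z}(\mathcal{J}_4)$ for the zero-dimensional radical ideal $\mathcal{J}_4\subset\mathbb{F}_4[\overline{X}_4]$ in the nine variables $\{x_{ij}\colon 0\le i,j\le 2\}$, it suffices to compute the reduced Gr\"obner basis of $\mathcal{J}_4$, read off the algebraic set, and organize it into the two stated families. Rather than using $\mathcal{J}_4$ as originally defined, I would adopt the reformulation of Remark \ref{remak_affine}: for each $c\in\mathbb{F}_4$, I discard the generators $G^{\mathrm{row}}_{a,b,c}$ and $G^{\mathrm{col}}_{a,b,c}$ and substitute the reduced Gr\"obner basis of $\mathcal{I}_4$ obtained in Proposition \ref{proposition_P4} (equivalently its simpler generators $x_1^3+x_2^3-1$ and $x_1x_2$), replacing each $x_i$ by $\sum_{j=0}^{2}x_{ij}c^j$ for the rows and by $\sum_{j=0}^{2}x_{ji}c^j$ for the columns. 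This yields an ideal with the same algebraic set as $\mathcal{J}_4$ but whose reduced Gr\"obner basis is far cheaper to compute in {\sc Singular} with \emph{lpp.lib}.

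The central computation is that reduced Gr\"obner basis, and I expect it to split $\mathcal{Z}(\mathcal{J}_4)$ according to whether the top coefficient $s_{22}$ (the variable $x_{22}$) vanishes. The main obstacle is extracting a transparent parametric description from the raw basis, exactly as was done for $\mathcal{I}_4$ and $\mathcal{I}_5$ in the two preceding propositions. Concretely, I would show that the basis forces the three corner coefficients to satisfy $s_{22}^3=s_{21}^3=s_{12}^3=1$ (or to vanish simultaneously), and that it expresses the mixed and linear coefficients $s_{11},s_{10},s_{01}$ as the polynomial functions $s_{12}s_{21}s_{22}^2$, $(s_{20}s_{22}+s_{21}^2)s_{12}s_{22}$ and $(s_{02}s_{22}+s_{12}^2)s_{21}s_{22}$ of the free parameters $s_{00},s_{02},s_{12},s_{20},s_{21},s_{22}$; the remaining relations should collapse to the two quadratic constraints $s_{20}(s_{20}s_{22}+s_{21}^2)=0$ and $s_{02}(s_{02}s_{22}+s_{12}^2)=0$ of \eqref{eq_LP4_constraints}. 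Verifying that these substituted expressions satisfy every generator of the basis, and that no solution is lost in passing to the parametric form \eqref{eq_LP4}, is the delicate bookkeeping step.

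In the complementary branch $s_{22}=0$, I expect the corner constraints to collapse to $s_{21}=s_{12}=s_{11}=0$ as well, leaving only the separate-variable polynomials \eqref{eq_S1} subject to $s_{01}^3+s_{02}^3=1$ and $s_{10}^3+s_{20}^3=1$; these are precisely the conditions making $f_{\overline{S}}(c,x)$ and $f_{\overline{S}}(x,c)$ lie in $\mathcal{P}_4$ for every $c$, by Proposition \ref{proposition_P4}. I would finish by tallying both families using that the cube map on $\mathbb{F}_4$ sends every nonzero element to $1$. For \eqref{eq_S1}, the relation $s_{01}^3+s_{02}^3=1$ holds iff exactly one of $s_{01},s_{02}$ is nonzero (six pairs), likewise for $(s_{10},s_{20})$, so with $s_{00}$ free one gets $6\cdot 6\cdot 4=144$. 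For \eqref{eq_LP4}, the cube conditions give $3^3=27$ choices of $(s_{22},s_{21},s_{12})$, each quadratic constraint leaves two admissible values for $s_{20}$ and for $s_{02}$, and $s_{00}$ is free, giving $27\cdot 2\cdot 2\cdot 4=432$. Disjointness is immediate since the two families differ on $s_{22}$, and the total $144+432=576=|\mathcal{LP}_4|$ serves as a consistency check against $\dim_{\mathbb{F}_4}(\mathbb{F}_4[\overline{X}_4]/\mathcal{J}_4)$ from Lemma \ref{lemma_affine}(b).
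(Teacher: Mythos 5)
Your proposal is correct and takes essentially the same route as the paper's proof: it invokes Remark \ref{remak_affine} to substitute the reduced Gr\"obner basis of $\mathcal{I}_4$ from Proposition \ref{proposition_P4} into the ideal of Lemma \ref{lemma_affine}(b), computes the reduced Gr\"obner basis of the resulting ideal in {\sc Singular}, and splits the resulting constraints according to whether $s_{22}$ vanishes, exactly as the paper does. Your explicit tallies ($6\cdot 6\cdot 4=144$ and $27\cdot 2\cdot 2\cdot 4=432$, summing to $576$) are correct and simply spell out a count the paper's proof leaves implicit.
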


\begin{proof} In order to determine the set $\mathcal{LP}_4$, we conveniently replace the generators of the ideal $\overline{I}_4$ in Lemma \ref{lemma_affine} by the polynomials in (\ref{equation_G4}) as we have indicated in Remark \ref{remak_affine}. The reduced Gr\"obner basis of the resulting ideal implies that every polynomial $\sum_{i,j=0}^2 s_{ij}x^iy^j\in \mathcal{LP}_4$ satisfies the next nine constraints.

\[\begin{cases}
\begin{array}{r}
s_{10}^3+s_{20}^3=1,\\
s_{01}^3+s_{02}^3=1,\\
s_{21}^3+s_{22}^3=0,\\
s_{12}^3+s_{22}^3=0,\\
s_{12}s_{10}+s_{11}^2s_{10}^3=0,\\
s_{11}+s_{12}s_{21}s_{22}^2=0,\\
s_{21}+s_{02}^2s_{12}s_{11}+s_{01}^2s_{11}^2=0,\\
s_{22}+s_{01}^2s_{12}s_{11}+s_{02}^2s_{12}^2=0,\\
s_{10}s_{22}+s_{12}s_{20}+s_{12}s_{21}^2s_{22}^2=0.
\end{array}
\end{cases}\]
Depending on whether $s_{22}
=0$ or not, these constraints decompose $\mathcal{LP}_4$ into the two subsets $\mathcal{LP}_{4,2}$ and $\mathcal{LP}_{4,4}$ described in the statement.
\end{proof}

\vspace{0.2cm}

In practice, if one is only interested in the number $N_q(d)$ (that is, in the cardinality of $\mathcal{P}_{q,d}$), and not in the enumeration of the polynomials in $\mathcal{P}_{q,d}$, then we can modify the ideals of Lemma \ref{lemma_affine_degrees} by including $x_0$ as generator of both ideals and replacing the generator $x_d^{q-1}-1$ by $x_d-1$. That is, we focus on monic polynomials without constant term. The cardinalities of the new quotient rings coincide with the required ones but are divided by $q(q-1)$. This is due to the fact that, if $f\in\mathcal{P}_q$, then $af+b\in \mathcal{P}_q$ for all $a,b\in\mathbb{F}_q$, with $a\neq 0$. Based on this algebraic geometry procedure, we show in Table \ref{table_Pqd} the number of polynomials in $\mathcal{P}_{q,d}$, with $q\leq 13$. (Note that the number of polynomials in $\mathcal{P}_8$ having degree less than six is $5,376$, which differs from the value $5,368$ indicated in \cite{Konyagin2002}. The remaining values for the number of polynomials having degree less than $q-2$, for $q\leq 11$, coincide with those indicated in that paper.) 

\begin{table}[ht]
\caption{Explicit values for $N_q(d)$.}\label{table_Pqd}%
\begin{tabular}{rrr||rrr}
$q$ & $d$  & $N_q(d)$ & $q$ & $d$  & $N_q(d)$ \\
\midrule
4   & 1   & 12     &  11 & 1 & 110\\
    & 2   & 12     &     & 3 & 1,210\\
5   & 1  & 20      &     & 6 & 29,040\\
    & 3   & 100    &     & 7 & 272,250\\ 
7   & 1  & 42      &     & 8 & 3,332,340\\
    & 4   & 588    &     & 9 & 36,281,850\\ 
    & 5 & 4,410    &  13 & 1 & 156\\
8  & 1  & 56       &     & 5 & 38,532\\
   & 2  & 56       &     & 7 & 233,220\\
   & 3  & 448      &     & 8 & 2,798,640\\
   & 4  & 1,232    &     & 9 & 33,948,720\\
   & 5  & 3,584    &     & 10 & 442,144,560\\
   & 6  & 34,944   &     & 11 & 5,747,856,972\\
9  & 1  & 72       &\\
   & 3  & 360      &\\
   & 5 & 1,944     &\\
   & 6 & 39,744    & \\
   & 7 & 320,760   &\\
\end{tabular}
\end{table}

\vspace{0.2cm}

The algebraic method described in Lemma \ref{lemma_affine}, Lemma \ref{lemma_affine_degrees} and Remark \ref{remak_affine} can be conveniently adapted to identify particular families of Latin squares, which in turn give rise to equivalent families of LPPs. As an illustrative example, we describe here the ideals associated with symmetric LPPs and reduced LPPs. First, let $\mathcal{SLP}_q$ and $\mathcal{SLP}_{q,d}$ denote, respectively, the subsets of symmetric polynomials in $\mathcal{LP}_q$ and $\mathcal{LP}_{q,d}$. Recall here that a polynomial $f\in\mathbb{F}_q[x,y]$ is called {\em symmetric} if $f(x,y)=f(y,x)$. Then, the next result holds readily from Lemmas \ref{lemma_affine} and \ref{lemma_affine_degrees}.

\begin{proposition}\label{proposition_symmetric}
The algebraic set of the zero-dimensional radical ideal
\[\mathcal{J}_q + \left\langle\,x_{ij}-x_{ji}\colon\, 0\leq i<j\leq q-2\,\right\rangle\subset \mathbb{F}_q[\overline{X}_q].\]
If $\mathcal{J}_q$ is replaced by $\mathcal{J}_{q,d}$, then the resulting zero-dimensional radical ideal is identified with $\mathcal{SLP}_{q,d}$. 
\end{proposition}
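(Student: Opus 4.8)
The plan is to reduce everything to the identification already established in Lemma \ref{lemma_affine}(b), showing that the single extra family of generators $x_{ij}-x_{ji}$ cuts out exactly the symmetric polynomials, and then to argue that the two structural properties (zero-dimensional and radical) are inherited for free. This is why the result is expected to hold readily: no new ideal-theoretic machinery is needed beyond what was used in Lemmas \ref{lemma_affine} and \ref{lemma_affine_degrees}.

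First I would recall that, by Lemma \ref{lemma_affine}(b), each zero $\overline{S}\in\mathcal{Z}(\mathcal{J}_q)$ is identified with the LPP $f_{\overline{S}}=\sum_{i,j=0}^{q-2}s_{ij}x^iy^j$, where $s_{ij}$ is the coordinate attached to the variable $x_{ij}$. The key observation is purely formal: since $f_{\overline{S}}(y,x)=\sum_{i,j}s_{ij}y^ix^j=\sum_{i,j}s_{ji}x^iy^j$, and both interpolating polynomials have degree less than $q$ in each variable, uniqueness of the representation means that the identity $f_{\overline{S}}(x,y)=f_{\overline{S}}(y,x)$ holds if and only if $s_{ij}=s_{ji}$ for all $i,j$. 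The diagonal relations $s_{ii}=s_{ii}$ are vacuous, so this is equivalent to $s_{ij}=s_{ji}$ for $0\le i<j\le q-2$, which is precisely the vanishing of the added generators. Hence the algebraic set of $\mathcal{J}_q+\langle\,x_{ij}-x_{ji}\colon 0\le i<j\le q-2\,\rangle$ is exactly $\mathcal{Z}(\mathcal{J}_q)$ intersected with the symmetric locus, and is therefore identified with $\mathcal{SLP}_q$.

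Next I would verify the structural claims. The enlarged ideal contains $\mathcal{J}_q$, and in particular all the field equations $x_{ij}^q-x_{ij}$. Since $x^q-x=\prod_{a\in\mathbb{F}_q}(x-a)$ is squarefree over $\mathbb{F}_q$, Seidenberg's Lemma \cite[Proposition 3.7.15]{Kreuzer2000} applies verbatim and guarantees that the enlarged ideal is radical, while the presence of a univariate relation in each variable forces it to be zero-dimensional. Consequently its algebraic set has exactly $\dim_{\mathbb{F}_q}$ of the quotient ring many points, exactly as in Lemma \ref{lemma_affine}. For the degree-restricted version I would simply replace $\mathcal{J}_q$ by $\mathcal{J}_{q,d}$ from Lemma \ref{lemma_affine_degrees}(b) and rerun the identical argument: the symmetry condition is still captured by $x_{ij}-x_{ji}$, and $\mathcal{J}_{q,d}$ likewise contains the field equations, so zero-dimensionality and radicality persist.

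I do not expect a genuine obstacle in this proof; the only point requiring care is the elementary translation step. The subtlety—such as it is—lies in confirming that symmetry of the \emph{bivariate} polynomial corresponds to transposition of the coefficient matrix $(s_{ij})$, i.e. to the involution $x_{ij}\leftrightarrow x_{ji}$ on the index set, and not to some other rearrangement of the variables; once this bookkeeping is pinned down, and one notes that the linear relations $x_{ij}-x_{ji}$ are independent of the degree-selecting generators of $\mathcal{J}_{q,d}$, everything follows mechanically from the two cited lemmas.
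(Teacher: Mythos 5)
Your proposal is correct and follows exactly the route the paper intends: the paper gives no explicit proof, asserting the proposition ``holds readily from Lemmas \ref{lemma_affine} and \ref{lemma_affine_degrees}'', and your argument is precisely that routine verification (the linear generators $x_{ij}-x_{ji}$ cut out the symmetric locus via uniqueness of the interpolating representation, while zero-dimensionality and radicality are inherited from the field equations via Seidenberg's Lemma). Your write-up in fact supplies a detail the paper leaves implicit, and also silently repairs the truncated wording of the statement by making explicit that the algebraic set is identified with $\mathcal{SLP}_q$.
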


\vspace{0.2cm}

Furthermore, the notion of reduced Latin squares readily translates to LPPs as follows. Recall here that a Latin square $L\in\mathcal{L}_q$ is called {\em reduced} if $L[a,0]=L[0,a]=a$ for all $a\in \mathbb{F}_q$. 

\begin{definition}\label{def:reduced} A polynomial $f\in \mathcal{LP}_q$ is called {\em reduced} if $f(x,0)=x$ and $f(0,y)=y$. This is equivalent to say that 
\begin{equation}\label{equation_reduced}
f=g\cdot xy + x + y,
\end{equation}
for some $g\in\mathbb{F}_q[x,y]$. We denote by $\mathcal{RLP}_q$ the subset of reduced polynomials in $\mathcal{LP}_q$.
\end{definition}

\vspace{0.2cm}

This definition is justified by fact that a polynomial $f\in \mathcal{RLP}_q$ if and only if the Latin square $L_f$ is reduced. Then, the next result holds readily from the equivalent relation among Latin squares

\begin{lemma} The number of local permutation polynomials over $\mathbb{F}_q$ is
\[|\mathcal{LP}_q|=q!\cdot (q-1)!\cdot |\mathcal{RLP}_q|.\]
\end{lemma}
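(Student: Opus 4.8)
The plan is to transport the statement across the bijections already established in the paper and then invoke the classical factorisation of the number of Latin squares. Mullen's correspondence identifies $\mathcal{LP}_q$ with $\mathcal{L}_q$, and by the justification following Definition \ref{def:reduced} it restricts to a bijection between $\mathcal{RLP}_q$ and the set of reduced Latin squares in $\mathcal{L}_q$. Hence it suffices to prove that $|\mathcal{L}_q|=q!\,(q-1)!\cdot R_q$, where $R_q$ denotes the number of reduced Latin squares of order $q$. I would realise both normalisation steps as principal isotopisms $(\theta_1,\theta_2,\mathrm{Id})$, which by \eqref{eq_LS_isotopic} permute rows and columns while fixing symbols.

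For the first step, given a Latin square $L$ let $\gamma\in\mathcal{S}_q$ be the permutation spelled out by its first row, $\gamma(b):=L[0,b]$. Applying the column permutation $\theta_2=\gamma$ is the unique way to bring the first row into the natural order $0,1,\dots,q-1$, so the assignment $L\mapsto\bigl(\gamma,\,L^{(\mathrm{Id},\gamma,\mathrm{Id})}\bigr)$ is a bijection between $\mathcal{L}_q$ and the product of $\mathcal{S}_q$ with the set of Latin squares having natural first row. This contributes the factor $q!$.

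For the second step, a square with natural first row automatically satisfies $L[0,0]=0$, so that its first column $\bigl(L[1,0],\dots,L[q-1,0]\bigr)$ is a permutation of $\{1,\dots,q-1\}$. Permuting the rows indexed $1,\dots,q-1$ by the inverse of this permutation (a row permutation fixing $0$) sorts the first column into natural order and produces a reduced square; there are $(q-1)!$ such row permutations, and crucially each of them fixes row $0$, so the first row stays natural. This yields a bijection onto the product of $\mathcal{S}_{q-1}$ with the reduced squares, contributing the factor $(q-1)!$ and completing the count.

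The main point requiring care is the bijectivity bookkeeping rather than any deep idea: one must check that each normalising permutation is unique (so that the maps are genuine bijections, not merely surjections) and that the two steps do not interfere — specifically, that restricting the row permutation to the indices $1,\dots,q-1$ is exactly what preserves the already-standardised first row. Once these routine verifications are in place, composing the two bijections gives $|\mathcal{L}_q|=q!\,(q-1)!\,R_q$, which transports back through the LPP correspondence to the claimed identity. Alternatively, this whole computation may simply be cited as the classical relation $L_n=n!\,(n-1)!\,R_n$ between the numbers of Latin squares and reduced Latin squares of order $n$.
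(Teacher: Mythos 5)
Your proof is correct and takes essentially the same route as the paper: the paper gives no explicit argument, stating that the lemma ``holds readily'' from the correspondence $\mathcal{LP}_q\leftrightarrow\mathcal{L}_q$ (restricting to $\mathcal{RLP}_q\leftrightarrow$ reduced squares) together with the classical relation $|\mathcal{L}_q|=q!\,(q-1)!\,R_q$, which is exactly the transport-plus-factorization you perform. The only difference is that you spell out the standard two-step normalization bijection behind the classical count (unique column permutation fixing the first row, then a row permutation fixing row $0$), a verification the paper leaves implicit.
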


\vspace{0.2cm}

From (\ref{eq_LPP2}), (\ref{eq_LPP3}) and (\ref{equation_reduced}), the only reduced polynomial in both $\mathcal{RLP}_2$ and $\mathcal{RLP}_3$ is $x+y$. For $q>3$, the next result holds readily from Lemma \ref{lemma_affine} and Definition \ref{def:reduced}.

\begin{proposition}\label{proposition_reduced} 
The algebraic set of the zero-dimensional radical ideal 
\[\mathcal{J}_q + \left\langle\,x_{00},\,x_{10}-1,\,x_{01}-1\,\right\rangle+\left\langle\,x_{0i},\,x_{i0}\colon\, 2\leq i\leq q-2\,\right\rangle\subset \mathbb{F}_q[\overline{X}_q].\]
is identified with the set $\mathcal{RLP}_q$. If $\mathcal{J}_q$ is replaced by $\mathcal{J}_{q,d}$, then the resulting zero-dimensional radical ideal is identified with $\mathcal{RLP}_{q,d}$. 
\end{proposition}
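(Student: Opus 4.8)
The plan is to verify that the three blocks of extra generators adjoined to $\mathcal{J}_q$ cut out exactly the coefficient constraints equivalent to reducedness, so that restricting $\mathcal{Z}(\mathcal{J}_q)$ by these generators mirrors, on the algebraic side, the passage from $\mathcal{LP}_q$ to $\mathcal{RLP}_q$. By Lemma~\ref{lemma_affine}, every zero $\overline{S}\in\mathcal{Z}(\mathcal{J}_q)$ is identified with the polynomial $f_{\overline{S}}=\sum_{i,j=0}^{q-2}s_{ij}x^iy^j\in\mathcal{LP}_q$, where $s_{ij}$ denotes the coordinate of $\overline{S}$ attached to the variable $x_{ij}$. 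The whole argument rests on reading off the two univariate restrictions of $f_{\overline{S}}$ from this expansion and matching the resulting constraints with the added generators.

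First I would substitute $y=0$ and $x=0$. Since $0^0=1$ and $0^k=0$ for $k\geq 1$, one obtains $f_{\overline{S}}(x,0)=\sum_{i=0}^{q-2}s_{i0}x^i$ and $f_{\overline{S}}(0,y)=\sum_{j=0}^{q-2}s_{0j}y^j$; in particular only the coefficients $s_{i0}$ and $s_{0j}$ survive, and each restriction is a polynomial of degree $<q$. By Definition~\ref{def:reduced}, $f_{\overline{S}}\in\mathcal{RLP}_q$ means $f_{\overline{S}}(x,0)=x$ and $f_{\overline{S}}(0,y)=y$, which---by comparing coefficients of two degree-$<q$ polynomials---is equivalent to $s_{00}=0$, $s_{10}=s_{01}=1$, and $s_{i0}=s_{0i}=0$ for $2\leq i\leq q-2$. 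These are precisely the zero loci of the generators $x_{00}$, $x_{10}-1$, $x_{01}-1$, and $x_{0i},x_{i0}$ with $2\leq i\leq q-2$. Hence the algebraic set of the enlarged ideal consists of exactly those zeros of $\mathcal{J}_q$ whose associated $f_{\overline{S}}$ is reduced, giving the desired one-to-one correspondence with $\mathcal{RLP}_q$.

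It then remains to confirm that the enlarged ideal is still zero-dimensional and radical, so that the earlier identification principle applies. Zero-dimensionality is automatic, since adjoining generators to a zero-dimensional ideal can only shrink the quotient. Radicality follows exactly as in Lemma~\ref{lemma_affine}: the enlarged ideal still contains every field equation $x_{ij}^q-x_{ij}=\prod_{a\in\mathbb{F}_q}(x_{ij}-a)$ inherited from $\mathcal{J}_q$, and these squarefree univariate polynomials trigger Seidenberg's Lemma. The degree-graded statement is obtained verbatim by using Lemma~\ref{lemma_affine_degrees} in place of Lemma~\ref{lemma_affine}: replacing $\mathcal{J}_q$ with $\mathcal{J}_{q,d}$ restricts to LPPs of total degree $d$, and intersecting with the same reduced constraints yields $\mathcal{RLP}_{q,d}$. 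I do not expect a genuine obstacle here; the only point needing care is the claim that the polynomial identities $f(x,0)=x$ and $f(0,y)=y$ reduce to a finite list of linear coefficient equations, but this is immediate because setting one variable to $0$ annihilates every monomial of positive degree in that variable, leaving no room for cancellation among the surviving terms.
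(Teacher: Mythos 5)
Your proof is correct and follows exactly the route the paper intends: the paper states this proposition without a written proof, noting only that it ``holds readily from Lemma~\ref{lemma_affine} and Definition~\ref{def:reduced}'', and your argument is precisely the fleshed-out version of that remark (coefficient comparison of $f_{\overline{S}}(x,0)$ and $f_{\overline{S}}(0,y)$ against the added generators, zero-dimensionality by enlargement, radicality via the field equations and Seidenberg's Lemma, and the degree-graded case via Lemma~\ref{lemma_affine_degrees}). No gaps; nothing further is needed.
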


\begin{table}[ht]
\caption{Cardinality of $\mathcal{LP}_{q,d}$.}\label{table_LPqd}%
\begin{tabular}{llrrr}
$q$ & $d$  & $|\mathcal{LP}_{q,d}|$ & $|\mathcal{SLP}_{q,d}|$ & $|\mathcal{RLP}_{q,d}|$\\
\midrule
4 & 1 & 36 & 12 & 1\\
  & 2 & 108 & 12 & 0\\
  & 4 & 432 & 72 & 3\\
5 & 1 & 80 & 20 & 1\\
  & 3 & 3,200 & 200 & 0\\
  & 5 & 22,000 & 100 & 19\\
  & 6 & 135,920 & 400 & 36\\
\end{tabular}
\end{table}

Based on our algebraic approach, we show in Table \ref{table_LPqd} the cardinality of the sets $\mathcal{LP}_{q,d}$, $\mathcal{SLP}_{q,d}$  and $\mathcal{RLP}_{q,d}$  for $q\in \{4,5\}$. As an illustrative example, we detail this procedure to characterize all polynomials in $\mathcal{SLP}_4$ and $\mathcal{RPL}_4$. 

\begin{theorem}\label{theorem_SLP4} Every polynomial in $\mathcal{SLP}_4$ is of the form
\[(s_{22}xy +s_{21}(x+y)+ s_{11})xy+s_{20}(x^2+y^2)+s_{10}(x+y)+s_{00},\]
with $s_{22},s_{21},s_{20},s_{10},s_{00}\in\mathbb{F}_4$ satisfying the next five constraints.
\[\begin{cases}\begin{array}{r}
s_{10}^3+s_{20}^3=1,\\
s_{11}(s_{22}^3+1)=0,\\
s_{21}(s_{22}^3+1)=0,\\
s_{20}s_{21}+s_{10}s_{22}+s_{22}^2=0,\\
s_{11}s_{21}+s_{22}^2=0.\\
\end{array}
\end{cases}\]
\end{theorem}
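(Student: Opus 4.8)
The plan is to proceed exactly as in the proof of Theorem~\ref{theorem_LP4}, now augmenting the defining ideal with the symmetry relations from Proposition~\ref{proposition_symmetric}. Concretely, I would start from the ideal whose algebraic set is $\mathcal{LP}_4$, built as in Remark~\ref{remak_affine} by substituting the reduced Gr\"obner basis~(\ref{equation_G4}) of $\mathcal{I}_4$ into the row and column generators, and then add the symmetry generators $\langle\,x_{ij}-x_{ji}\colon 0\le i<j\le 2\,\rangle$. Since $f(x,y)=f(y,x)$ forces $s_{01}=s_{10}$, $s_{02}=s_{20}$, and $s_{12}=s_{21}$, this immediately collapses the nine variables of the general case to the six displayed in the statement ($s_{22},s_{21},s_{20},s_{11},s_{10},s_{00}$), and it explains the symmetric shape of the polynomial, namely $xy$-terms grouped with $s_{22}xy+s_{21}(x+y)+s_{11}$ and the quadratic part $s_{20}(x^2+y^2)+s_{10}(x+y)$.

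Next I would compute the reduced Gr\"obner basis of this enlarged ideal in $\mathbb{F}_4[\overline{X}_4]$ under the same degree-reverse-lexicographic order used in Theorem~\ref{theorem_LP4}. The expected payoff is that the symmetry identifications make several of the original nine constraints redundant or equal in pairs: the two constraints $s_{10}^3+s_{20}^3=1$ and $s_{01}^3+s_{02}^3=1$ merge into the single relation $s_{10}^3+s_{20}^3=1$; the pair $s_{21}^3+s_{22}^3=0$ and $s_{12}^3+s_{22}^3=0$ become $s_{21}^3=s_{22}^3$, which I would rewrite together with the $s_{11}$-relation as $s_{11}(s_{22}^3+1)=0$ and $s_{21}(s_{22}^3+1)=0$; and the two linear-coefficient constraints should, after setting $s_{12}=s_{21}$ and $s_{02}=s_{20}$ and reducing, coalesce into the two displayed relations $s_{20}s_{21}+s_{10}s_{22}+s_{22}^2=0$ and $s_{11}s_{21}+s_{22}^2=0$. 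The bookkeeping here is that the constraint $s_{11}+s_{12}s_{21}s_{22}^2=0$ from Theorem~\ref{theorem_LP4} becomes $s_{11}+s_{21}^2s_{22}^2=0$, which over $\mathbb{F}_4$ (using $s_{21}^3=s_{22}^3=1$ on the nonzero locus) is equivalent to the cleaner cubic-free form $s_{11}s_{21}+s_{22}^2=0$ appearing in the statement.

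The main obstacle is verifying that the raw Gr\"obner output, which will be a list of polynomials in the six variables, is genuinely equivalent to the five tidy constraints claimed, rather than merely implying them. I would handle this by an ideal-equality check: showing that the ideal generated by the five displayed constraints (together with the field equations $x_{ij}^4-x_{ij}$) has the same reduced Gr\"obner basis as the ideal produced by the substitution-plus-symmetry construction, exactly mirroring the step in Proposition~\ref{proposition_P4} where $\mathcal{I}_4$ was shown to coincide with $\langle\,x_1^3+x_2^3-1,\,x_1x_2\,\rangle$. Since both ideals are zero-dimensional and radical by Lemma~\ref{lemma_affine}, it suffices to confirm the two algebraic sets agree; equality of the quotient-ring dimensions (which should return $|\mathcal{SLP}_4|=96$, consistent with the row $12+12+72$ of Table~\ref{table_LPqd}) together with containment of one ideal in the other then forces equality. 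All of these computations are routine applications of our \emph{lpp.lib} library in \textsc{Singular}, so the only real care needed is in the $\mathbb{F}_4$-specific simplifications (repeatedly using $t^3=1$ for nonzero $t$ and $t^4=t$) that convert the machine-generated generators into the human-readable constraints displayed in the statement.
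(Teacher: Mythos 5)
Your proposal is correct and takes essentially the same route as the paper: its proof likewise computes the reduced Gr\"obner basis of the ideal from Proposition \ref{proposition_symmetric} for $q=4$ (there with respect to the lexicographical order, yielding sixteen generators) and reads the five constraints off the zeros of that basis. Your additional bookkeeping---hand-reducing the nine constraints of Theorem \ref{theorem_LP4} under $s_{01}=s_{10}$, $s_{02}=s_{20}$, $s_{12}=s_{21}$, and checking $|\mathcal{SLP}_4|=12+12+72=96$ against Table \ref{table_LPqd} via an ideal-equality argument in the style of Proposition \ref{proposition_P4}---merely makes explicit a verification the paper leaves implicit, and your choice of monomial order is immaterial since any Gr\"obner basis determines the same algebraic set.
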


\begin{proof} The reduced Gr\"obner basis of the ideal described in Proposition \ref{proposition_symmetric} for $q=4$, with respect to the lexicographical order, is formed by the following 16 generators.
\[x_{12}+x_{21},\hspace{1cm}
x_{02}+x_{20},\hspace{1cm}
x_{01}+x_{10},\hspace{1cm}
x_{10}x_{20},\hspace{1cm}
x_{11}^2+x_{21}x_{22},\]
\[
x_{21}^2+x_{11}x_{22},\hspace{1cm} 
x_{11}x_{21}+x_{22}^2,\hspace{1cm} 
x_{11}(x_{22}^3+1),\hspace{1cm} 
x_{21}(x_{22}^3+1),\hspace{0.8cm} 
x_{10}^3+x_{20}^3+1,\]
\[x_{10}x_{21}(x_{10}+x_{22}),\hspace{2cm} 
x_{10}x_{22}(x_{10}+x_{22}),\hspace{2cm} 
x_{20}x_{21}+x_{10}x_{22}+x_{22}^2,\]
\[x_{11}x_{20}+x_{10}x_{21}+x_{21}x_{22},\hspace{1cm} 
x_{10}x_{11}+x_{11}x_{22}+x_{20}x_{22},\hspace{1cm} 
x_{22}(x_{20}^2+x_{10}x_{21}+x_{21}x_{22}).\]
Their zeros give rise to the constraints of the statement.
\end{proof}

\vspace{0.2cm}

\begin{theorem}\label{theorem_RLP4} Every polynomial in $\mathcal{RLP}_4$ is of the form
\begin{equation}\label{equation_RLP4}
\left(a^3xy+a(x+y)+a^2\right)xy+x+y\in\mathbb{F}_4[x,y]
\end{equation}
with $a\in\mathbb{F}_4$. If $a=0$, then we get the polynomial $x+y\in \mathcal{LP}_{4,1}$. If $a\neq 0$, then we get three reduced polynomials in $\mathcal{LP}_{4,4}$. Moreover, if $u$ denotes a primitive element of $\mathbb{F}_4$, with minimum polynomial $u^2+u+1$, then all the four reduced Latin squares in $\mathcal{L}_4$ are represented by the following array.
\[\begin{array}{|c|c|c|c|} \hline
0 & 1 & u & u^2\\ \hline
1 & a^3+a^2 & (a^3+1)u^2+a^2u+a & (a^3+1)u+a^2u^2+a\\ \hline
u & (a^3+1)u^2+a^2u+a & a^3u+a^2u^2& (a+1)^3\\ \hline
u^2 & (a^3+1)u+a^2u^2+a& (a+1)^3& a^3u^2+a^2u\\ \hline
\end{array}\]
\end{theorem}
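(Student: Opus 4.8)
The plan is to intersect the reduced conditions of Definition \ref{def:reduced} with the complete description of $\mathcal{LP}_4$ in Theorem \ref{theorem_LP4}, handling its two disjoint pieces in turn. First I would translate $f(x,0)=x$ and $f(0,y)=y$ into coefficient constraints on a general $\sum_{i,j=0}^{2}s_{ij}x^iy^j$: putting $y=0$ forces $s_{00}=0$, $s_{10}=1$, $s_{20}=0$, and symmetrically $x=0$ forces $s_{01}=1$, $s_{02}=0$. Equivalently, one could simply compute the reduced Gr\"obner basis of the ideal of Proposition \ref{proposition_reduced} for $q=4$, exactly as was done for $\mathcal{SLP}_4$ in Theorem \ref{theorem_SLP4}; both routes return the same five constraints.

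Next I would impose these on each piece. On the separate-variables family (\ref{eq_S1}) the conditions $s_{20}=s_{02}=0$, $s_{10}=s_{01}=1$, $s_{00}=0$ leave only $x+y$, which is the case $a=0$. On the family (\ref{eq_LP4}) they give $s_{20}=s_{02}=s_{00}=0$ together with $s_{21}^2s_{12}s_{22}=1$ and $s_{12}^2s_{21}s_{22}=1$ from the two linear coefficients; dividing these yields $s_{21}=s_{12}$, and then the cube conditions of (\ref{eq_LP4_constraints}), namely $s_{22}^3=s_{21}^3=s_{12}^3=1$, collapse the common relation to $s_{22}=1$. Setting $a:=s_{21}=s_{12}\in\mathbb{F}_4^{\ast}$ and substituting into (\ref{eq_LP4}) gives the $xy$-coefficient $s_{12}s_{21}s_{22}^2=a^2$, both linear coefficients $s_{21}^2s_{12}=a^3=1$, and leading coefficient $s_{22}=1$, i.e. the polynomial $\left(xy+a(x+y)+a^2\right)xy+x+y$. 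Writing the leading $1$ as $a^3$ (legitimate for $a\neq0$) puts this in the displayed shape (\ref{equation_RLP4}), which at $a=0$ returns $x+y$; hence a single expression covers all four values $a\in\mathbb{F}_4$. The three nonzero $a$ give distinct polynomials (they differ in the $x^2y$-coefficient $a$), each of total degree four since $a^3=1\neq0$, so they sit in $\mathcal{LP}_{4,4}$, confirming the counts $|\mathcal{RLP}_{4,1}|=1$ and $|\mathcal{RLP}_{4,4}|=3$ of Table \ref{table_LPqd}.

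Finally I would build the Latin square by evaluating $L_f[r,c]=f(r,c)$ for $r,c\in\{0,1,u,u^2\}$. Since $s_{21}=s_{12}$ the polynomial is symmetric, so the array is symmetric and only the six entries on or above the diagonal need be computed; the border row and column reproduce $0,1,u,u^2$ by the reduced property. Reducing powers of $u$ with $u^2=u+1$, $u^3=1$ (hence $u^4=u$) and using $1+u=u^2$, $u+u^2=1$ in characteristic two, each interior cell simplifies to the tabulated value, e.g. $f(1,1)=a^3+a^2$, $f(u,u)=a^3u+a^2u^2$, and $f(u,u^2)=a^3+a^2+a+1=(a+1)^3$.

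The arithmetic is light, so the only real friction I anticipate is bookkeeping: staying consistent when reducing powers of $u$, and noticing that a cell such as $f(1,u)$ must be rewritten via $1+u=u^2$ to appear as $(a^3+1)u^2+a^2u+a$ rather than in its raw $\{1,u\}$-expansion. A secondary subtlety worth flagging is that the labelling of the free parameter is a choice: taking $a=s_{21}$ reproduces (\ref{equation_RLP4}) verbatim, whereas naming it by the $xy$-coefficient instead would differ by the Frobenius twist $a\mapsto a^2$, so one must fix the convention to match the statement exactly.
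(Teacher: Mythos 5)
Your proof is correct, but it takes a different route from the paper's. The paper's proof is a single computation: it runs the machinery of Proposition \ref{proposition_reduced} for $q=4$ and reports the reduced Gr\"obner basis of the ideal $\mathcal{J}_4+\langle x_{00},\,x_{10}-1,\,x_{01}-1,\,x_{20},\,x_{02}\rangle$ (ten generators, among them $x_{22}^2+x_{22}$, $x_{21}(x_{22}+1)$, $x_{21}^3+x_{22}$, $x_{12}+x_{21}$, $x_{11}+x_{21}^2$), from whose zeros the parametrization (\ref{equation_RLP4}) and the array are read off directly. You instead specialize the already-proved Theorem \ref{theorem_LP4} by hand: imposing $s_{00}=0$, $s_{10}=s_{01}=1$, $s_{20}=s_{02}=0$ on the two pieces (\ref{eq_S1}) and (\ref{eq_LP4}), deriving $s_{21}^2s_{12}s_{22}=s_{12}^2s_{21}s_{22}=1$, hence $s_{21}=s_{12}=:a$ and, via $a^3=1$, $s_{22}=1$. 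I verified your elimination and your sample table entries (e.g.\ $f(1,u)$ agrees with $(a^3+1)u^2+a^2u+a$ after using $u^2+u+1=0$ to absorb the discrepancy $a(u^2+u+1)=0$, and $f(u,u^2)=a^3+a^2+a+1=(a+1)^3$ in characteristic two); everything checks, including membership in $\mathcal{LP}_{4,4}$ for $a\neq 0$ since all constraints (\ref{eq_LP4_constraints}) hold with $s_{20}=s_{02}=0$. What each approach buys: the paper's is uniform with its algebraic-set methodology, requires no prior structural theorem, and certifies the count $|\mathcal{RLP}_4|=4$ mechanically; yours needs no new computer-algebra run, makes visible \emph{why} the answer is one-parameter ($s_{21}=s_{12}$ forces symmetry, the cube conditions force $s_{22}=1$), and actually supplies the table verification that the paper leaves as ``follows readily.'' One small inaccuracy in your framing: the Gr\"obner basis route does not ``return the same five constraints'' --- it returns the full ten-generator solved system (your five linear conditions together with the already-eliminated LPP structure), whereas your five conditions alone still require intersecting with Theorem \ref{theorem_LP4}, which is exactly the work your proposal then carries out, so no gap results.
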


\begin{proof} The reduced Gr\"obner basis of the ideal described in Proposition \ref{proposition_reduced} for $q=4$, with respect to the lexicographical order, is the set
{\small\[\left\{
x_{22}^2+x_{22},\,
x_{21}(x_{22}+1),\,
x_{21}^3+x_{22},\,
x_{20},\,
x_{12}+x_{21},\,
x_{11}+x_{21}^2,\,
x_{10}+1,\,
x_{02},\,
x_{01}+1,\,
x_{00}\right\}.\]}
Then, the result follows readily from the zeros of these generators.    
\end{proof}

\vspace{0.5cm}

It is clear that the constraints satisfied by the coefficients of any polynomial in $\mathcal{P}_q$, $\mathcal{LP}_q$ or $\mathcal{RLP}_q$ become much more convoluted as the order $q$ increases. Determining them from this algebraic approach becomes even harder because, as we have already mentioned in the introductory section, the calculation of Gr\"obner bases is extremely sensitive to the number of variables and the length and degree of the generators. In any case, it becomes much easier when some kind of symmetry is imposed in the polynomials under consideration. The next section delves into this aspect by translating the notions of isotopism of Latin square to the context of LPPs.

\section{Isotopisms of LPPs}\label{sec:symmetries}

In the preliminary section, we have shown how isotopisms play a relevant role in studying the symmetries of a Latin square. This notion is naturally translated to quasigroups (see \cite{Albert1943}) and hence to LPPs. This section focuses on this last aspect. Previously, we introduce some notation concerning the composition of PPs and LPPs.

Since every polynomial in $\mathcal{P}_q$ acts as a permutation on $\mathbb{F}_q$, the composition of two polynomials $f,g\in\mathcal{P}_q$ is the polynomial
\begin{equation}\label{eq_fg}
fg:=f(g(x))\in\mathcal{P}_q    
\end{equation}
where the composition is performed modulo $x^q-x$. In particular, the inverse of a polynomial $f\in\mathcal{P}_q$ is the unique polynomial $f^{-1}\in \mathcal{P}_q$ such that $ff^{-1}=f^{-1}f=x$. Furthermore, if $f\in\mathcal{LP}_q$ and $g,h\in\mathcal{P}_q$, then we define the polynomials
\begin{equation}\label{eq_fgh}f[g,h]:=f(g(x),h(y))\in \mathcal{LP}_q
\end{equation}
and
\begin{equation}\label{eq_gf}
gf:=g(f(x,y))\in\mathcal{LP}_q,
\end{equation}
where the compositions are performed modulo $x^q-x$ and $y^q-y$. Here, we use the bracket notation in (\ref{eq_fgh}) to avoid confusion, because we have, for example, $f[x,x]:=f(x,y)$. In addition, by abuse of notation, we assume that both the polynomial $h$ in (\ref{eq_fgh}) and the polynomial $g$ in (\ref{eq_gf}) can act on the variable $y$ even if $g,h\in\mathbb{F}_q[x]$. Both products (\ref{eq_fgh}) and (\ref{eq_gf}) were already considered by Mullen  \cite{Mullen1981} and Lee and Ko \cite{Lee1994} to construct LPPs. The last two authors raised a multivariate version of the following problem.

\begin{problem}\label{problem_Lee} Find some polynomial in $\mathcal{LP}_q$ that cannot be constructed from a polynomial with separate variables (that is, a polynomial $h_1(x)+h_2(y)\in\mathcal{LP}_q$, with $h_1,h_2\in\mathcal{P}_q$) and the application of the products (\ref{eq_fgh}) and (\ref{eq_gf}).    
\end{problem}

\vspace{0.2cm}

In what follows, we show how this question is related to the notion of isotopism of Latin squares, which is readily translated to LPPs as follows.

\begin{definition}\label{definition_isotopic} A polynomial $f\in \mathcal{LP}_q$ is said to be {\em isotopic} to a polynomial $g\in \mathcal{LP}_q$ if 
\begin{equation}\label{eq_isotopism}
g[h_1,\, h_2]=h_3f
\end{equation}
for some $h_1,\,h_2,\,h_3\in \mathcal{P}_q$. The triple $(h_1,h_2,h_3)$ is an {\em isotopism} from $f$ to $g$. This is {\em principal} if $h_3=x$. If this is the case, then $f$ and $g$ are said to be {\em principal isotopic}. Furthermore, if $h_1=h_2=h_3$, then we get an {\em isomorphism} from $f$ to $g$, which are then said to be {\em isomorphic}. 
\end{definition}

\vspace{0.5cm}

These notions are justified by the next result, which holds from (\ref{eq_LS_isotopic}) and (\ref{eq_isotopism}).

\begin{lemma}\label{lemma_isotopic} Two polynomials $f,g\in \mathcal{LP}_q$ are (principal) isotopic (respectively, isomorphic) if and only if their associated Latin squares $L_f, L_g\in \mathcal{L}_q$ are (principal) isotopic (respectively, isomorphic).  
\end{lemma}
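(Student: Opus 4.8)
The plan is to reduce the asserted equivalence to a single pointwise identity on $\mathbb{F}_q^2$ and then to exhibit a literal identification between an isotopism of LPPs and an isotopism of the associated Latin squares. The crucial structural fact is that, via interpolation, $\mathcal{S}_q$ is in natural bijection with $\mathcal{P}_q$ (each permutation of $\mathbb{F}_q$ is realized by a unique permutation polynomial of degree less than $q$); hence a triple $(h_1,h_2,h_3)\in\mathcal{P}_q^3$ carries exactly the same data as a triple $(\theta_1,\theta_2,\theta_3)\in\mathcal{S}_q^3$. First I would evaluate both sides of the defining equation (\ref{eq_isotopism}) at an arbitrary point $(a,b)\in\mathbb{F}_q^2$. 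Unwinding the conventions (\ref{eq_fgh}) and (\ref{eq_gf}), the identity $g[h_1,h_2]=h_3f$ becomes $g(h_1(a),h_2(b))=h_3(f(a,b))$ for all $a,b\in\mathbb{F}_q$, where one uses that reduction modulo $x^q-x$ and $y^q-y$ leaves values on $\mathbb{F}_q$ unchanged, so that the symbolic compositions agree with the pointwise compositions of the corresponding maps.

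Next I would rewrite the Latin square side. Since $L_f[a,b]=f(a,b)$, the entry set of $L_f$ is $\{(a,b,f(a,b)):a,b\in\mathbb{F}_q\}$, so (\ref{eq_LS_isotopic}) gives $\mathrm{Ent}(L_f^{\Theta})=\{(\theta_1(a),\theta_2(b),\theta_3(f(a,b))):a,b\in\mathbb{F}_q\}$ for $\Theta=(\theta_1,\theta_2,\theta_3)$. Thus the condition $L_g=L_f^{\Theta}$ is equivalent to $g(\theta_1(a),\theta_2(b))=\theta_3(f(a,b))$ for all $a,b\in\mathbb{F}_q$. Comparing this with the pointwise form of (\ref{eq_isotopism}) derived above, the two conditions are literally the same equation under the identification $\theta_i=h_i$ for $i\in\{1,2,3\}$. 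Consequently, $(h_1,h_2,h_3)$ is an isotopism from $f$ to $g$ in the sense of Definition \ref{definition_isotopic} if and only if $\Theta=(h_1,h_2,h_3)$ is an isotopism with $L_g=L_f^{\Theta}$, and the bijection between $\mathcal{P}_q$ and $\mathcal{S}_q$ guarantees that the existence of such a triple on one side is equivalent to its existence on the other.

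The refinements for the principal and isomorphic cases then follow with no further work, since the identification $\theta_i=h_i$ respects exactly the constraints defining these subclasses: $h_3=x$ corresponds to $\theta_3=\mathrm{Id}$ (principal isotopism), and $h_1=h_2=h_3$ corresponds to $\theta_1=\theta_2=\theta_3$ (isomorphism). The only genuinely delicate point, and the step I would write out most carefully, is the passage from the symbolic identity $g[h_1,h_2]=h_3f$ in $\mathcal{LP}_q$ to the pointwise identity on $\mathbb{F}_q^2$: one must confirm that the bracket substitution in (\ref{eq_fgh}) and the outer composition in (\ref{eq_gf}), each reduced modulo the relevant $x^q-x$, do evaluate to $g(h_1(a),h_2(b))$ and $h_3(f(a,b))$ respectively. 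Everything else is a direct comparison of the two pointwise conditions.
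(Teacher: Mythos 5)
Your proposal is correct and matches the paper's own justification: the paper states that the lemma ``holds from (\ref{eq_LS_isotopic}) and (\ref{eq_isotopism})'', and your argument is precisely that unwinding, reducing both sides to the pointwise identity $g(h_1(a),h_2(b))=h_3(f(a,b))$ on $\mathbb{F}_q^2$ via the bijection between $\mathcal{S}_q$ and $\mathcal{P}_q$. You also correctly flag and resolve the only delicate point --- that the symbolic identity modulo $x^q-x$, $y^q-y$ is equivalent to the pointwise one, by the paper's convention of identifying functions with their interpolating polynomials of degree less than $q$ in each variable.
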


\vspace{0.2cm}

Since every polynomial in $\mathcal{P}_q$ acts as a permutation on $\mathbb{F}_q$, it is readily verified that being isotopic is an equivalence relation among LPPs. This allows their distribution in {\em isotopism classes}. A relevant isotopism class of LPPs is that one containing the linear polynomial $x+y$. In order to deal with this class, we introduce here the notion of {\em isolinearity}.

\begin{definition}\label{definition_isolinear} An LPP  is called {\em isolinear} if it is isotopic to $x+y$. Otherwise, it is called {\em non-isolinear}.
\end{definition}

\vspace{0.2cm}

The next lemma characterizes the isolinear polynomials in $\mathcal{LP}_q$. Then, the subsequent proposition shows that the polynomial sought in Problem \ref{problem_Lee} is any non-isolinear.

\begin{lemma}\label{lemma_isotopic_23} A polynomial $f\in\mathcal{LP}_q$ is isolinear if and only if 
\[f:=f(x,y)=h_3(h_1(x)+h_2(y))\]
for some polynomials $h_1,h_2,h_3\in \mathcal{P}_q$. In particular, every LPP with separate variables is isolinear.
\end{lemma}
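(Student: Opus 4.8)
The plan is to unwind Definition \ref{definition_isolinear} directly, using the explicit shape of the two products in (\ref{eq_fgh}) and (\ref{eq_gf}) together with the closure of $\mathcal{P}_q$ under composition and inversion recorded around (\ref{eq_fg}). The whole argument reduces to a single change of the outer variable, so no Gr\"obner-basis machinery is needed here; I would prove the two implications separately and then read off the ``in particular'' clause as a special case.

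For the forward implication, I would assume that $f$ is isolinear, so that $f$ is isotopic to $x+y$. Taking $g=x+y$ in (\ref{eq_isotopism}), there exist $h_1,h_2,h_3\in\mathcal{P}_q$ with $(x+y)[h_1,h_2]=h_3f$. Expanding the left-hand side through (\ref{eq_fgh}) gives $h_1(x)+h_2(y)$, while the right-hand side is $h_3(f(x,y))$ by (\ref{eq_gf}). Since $h_3\in\mathcal{P}_q$ admits an inverse $h_3^{-1}\in\mathcal{P}_q$, composing on the left with $h_3^{-1}$ (modulo $x^q-x$) yields $f(x,y)=h_3^{-1}(h_1(x)+h_2(y))$. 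Relabelling $h_3^{-1}$ as the outer permutation produces the claimed form.

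For the converse, I would start from $f(x,y)=h_3(h_1(x)+h_2(y))$ with $h_1,h_2,h_3\in\mathcal{P}_q$ and compose with $h_3^{-1}\in\mathcal{P}_q$, obtaining $h_3^{-1}f=h_1(x)+h_2(y)=(x+y)[h_1,h_2]$. This is exactly (\ref{eq_isotopism}) for the triple $(h_1,h_2,h_3^{-1})$ and $g=x+y$, so $(h_1,h_2,h_3^{-1})$ is an isotopism from $f$ to $x+y$ and $f$ is isolinear. The final assertion then falls out immediately: a polynomial with separate variables $h_1(x)+h_2(y)$ is the case $h_3=x$ (the identity PP) of the characterization, hence isolinear.

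I do not expect a substantive obstacle; the statement is essentially a restatement of isotopy to $x+y$ once the bracket and composition notations are expanded. The only points requiring care are bookkeeping: tracking which variable each composition acts on in (\ref{eq_fgh}) versus (\ref{eq_gf}), and checking that the outer factor remains a genuine PP after inversion. Both are guaranteed by the closure of $\mathcal{P}_q$ under composition and inverses, and the fact that $x\in\mathcal{P}_q$ covers the separate-variables case.
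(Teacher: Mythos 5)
Your proof is correct and follows essentially the same route as the paper, which simply notes that the equivalence falls out of Definitions \ref{definition_isotopic} and \ref{definition_isolinear} and obtains the ``in particular'' clause by taking the trivial permutation $h_3=x$; you have merely written out the unwinding of $(x+y)[h_1,h_2]=h_3f$ and the relabelling of $h_3^{-1}$ explicitly. The bookkeeping you flag (closure of $\mathcal{P}_q$ under composition and inversion modulo $x^q-x$) is exactly what the paper's terser proof takes for granted.
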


\begin{proof} The result holds readily from Definitions \ref{definition_isotopic} and \ref{definition_isolinear}. Then, the consequence follows once we consider the trivial permutation $h_3=x$.
\end{proof}

\vspace{0.2cm}

\begin{proposition} An LPP is a solution of Problem \ref{problem_Lee} if and only if it is non-isolinear.    
\end{proposition}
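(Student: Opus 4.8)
The plan is to prove the logically equivalent statement that the LPPs \emph{constructible} in the sense of Problem~\ref{problem_Lee} are exactly the isolinear ones. Writing $\mathcal{C}$ for the smallest subset of $\mathcal{LP}_q$ that contains every separate-variables LPP $h_1(x)+h_2(y)$ (with $h_1,h_2\in\mathcal{P}_q$) and is closed under both products~(\ref{eq_fgh}) and~(\ref{eq_gf}), a polynomial is a solution of Problem~\ref{problem_Lee} precisely when it lies outside $\mathcal{C}$. Hence it suffices to establish that $\mathcal{C}$ coincides with the isolinear class, which I would do by the two inclusions below, invoking throughout the characterization of isolinearity furnished by Lemma~\ref{lemma_isotopic_23} and the fact that $\mathcal{P}_q$ is a group under the composition~(\ref{eq_fg}).

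For the inclusion ``isolinear $\subseteq\mathcal{C}$'', suppose $f$ is isolinear. By Lemma~\ref{lemma_isotopic_23} we may write $f(x,y)=h_3(h_1(x)+h_2(y))$ for some $h_1,h_2,h_3\in\mathcal{P}_q$. The separate-variables polynomial $p:=h_1(x)+h_2(y)$ is a base element of $\mathcal{C}$, and a single application of the product~(\ref{eq_gf}) with $g=h_3$ yields $h_3p=h_3(p(x,y))=f$. Thus $f\in\mathcal{C}$, so $f$ is not a solution of Problem~\ref{problem_Lee}; this settles one direction of the equivalence.

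For the reverse inclusion ``$\mathcal{C}\subseteq$ isolinear'', I would show that the isolinear class already satisfies the two defining properties of $\mathcal{C}$, whence minimality of $\mathcal{C}$ forces the inclusion. The base case is immediate: every separate-variables LPP is isolinear by the last sentence of Lemma~\ref{lemma_isotopic_23}. For closure, let $f$ be isolinear, say $f(x,y)=h_3(h_1(x)+h_2(y))$. Applying~(\ref{eq_fgh}) gives $f[g,h](x,y)=h_3\big((h_1g)(x)+(h_2h)(y)\big)$, and applying~(\ref{eq_gf}) gives $gf(x,y)=(gh_3)(h_1(x)+h_2(y))$; since $h_1g$, $h_2h$ and $gh_3$ are again in $\mathcal{P}_q$, both outputs are of the isolinear form. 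Hence the isolinear class is closed under both products, so $\mathcal{C}\subseteq$ isolinear. Combining the two inclusions shows $\mathcal{C}$ equals the isolinear class, and taking complements in $\mathcal{LP}_q$ gives the proposition.

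The computations above are routine, so the only real obstacle is conceptual: one must fix the precise meaning of ``constructible'' as the closure $\mathcal{C}$ and justify the minimality description used in the second inclusion. Once this is pinned down, the argument reduces to the two one-line verifications that the isolinear form $h_3(h_1(x)+h_2(y))$ is preserved by the two products, each of which merely absorbs the inner or outer permutation into $h_1,h_2$ or $h_3$ via composition in the group $\mathcal{P}_q$.
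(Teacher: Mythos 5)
Your proposal is correct and takes essentially the same route as the paper: both arguments rest on Lemma \ref{lemma_isotopic_23}, showing that the isolinear class contains all separate-variables LPPs and is stable under the products (\ref{eq_fgh}) and (\ref{eq_gf}), while every isolinear polynomial is obtained from a separate-variables one by a single application of (\ref{eq_gf}). The only differences are cosmetic: you verify closure by directly absorbing the composed permutations into the normal form $h_3(h_1(x)+h_2(y))$, where the paper instead invokes the fact that both products yield polynomials isotopic to their input, and your explicit closure set $\mathcal{C}$ with its minimality argument pins down the notion of ``constructible'' that the paper's compressed wording leaves implicit.
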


\begin{proof} On the one hand, if $f\in\mathcal{LP}_q$ is a polynomial with separate variables, then the product $f[g,h]\in\mathcal{LP}_q$ defined in (\ref{eq_fgh}) is also a polynomial with separate variables. On the other hand, Lemma \ref{lemma_isotopic_23} implies that applying the product (\ref{eq_gf}) to a polynomial with separate variables gives rise to an isolinear polynomial. Then, the result holds from the fact that applying the products (\ref{eq_fgh}) and (\ref{eq_gf}) to an isolinear polynomial gives rise to an isotopic polynomial.    
\end{proof}

\vspace{0.2cm}

The next result deals with the isolinear polynomials in $\mathcal{LP}_q$, with $q\in\{2,3,4\}$.

\begin{proposition}\label{proposition_isolinear} Every polynomial in $\mathcal{LP}_2$ and $\mathcal{LP}_3$ is isolinear. Moreover, a polynomial in $\mathcal{LP}_4$ is isolinear if and only if it belongs to the set $\mathcal{LP}_{4,1}\cup \mathcal{LP}_{4,2}$ described in Theorem \ref{theorem_LP4}.
\end{proposition}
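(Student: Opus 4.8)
The plan is to treat the three fields separately, with $q\in\{2,3\}$ immediate and $q=4$ carrying the real content through its two implications. The unifying tool throughout is Lemma~\ref{lemma_isotopic_23}, and in particular its consequence that \emph{every} LPP with separate variables is isolinear.

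First I would dispatch $\mathcal{LP}_2$ and $\mathcal{LP}_3$ directly from Mullen's descriptions \eqref{eq_LPP2} and \eqref{eq_LPP3}. Every element of $\mathcal{LP}_2$ is $x+y+s_{00}$, and every element of $\mathcal{LP}_3$ is $s_{10}x+s_{01}y+s_{00}$ with $s_{10}^2s_{01}^2=1$, the latter constraint forcing $s_{10},s_{01}\neq 0$. In both cases the polynomial splits as $h_1(x)+h_2(y)$ with $h_1,h_2$ linear permutation polynomials, so it has separate variables and is isolinear by Lemma~\ref{lemma_isotopic_23}. For the easy direction $(\Leftarrow)$ in $\mathcal{LP}_4$, I would observe that every polynomial in $\mathcal{LP}_{4,1}\cup\mathcal{LP}_{4,2}$ has the shape \eqref{eq_S1}, which regroups as $(s_{20}x^2+s_{10}x)+(s_{02}y^2+s_{01}y+s_{00})$, a polynomial with separate variables; here the constraints $s_{10}^3+s_{20}^3=s_{01}^3+s_{02}^3=1$ guarantee, via Proposition~\ref{proposition_P4}, that each summand is itself a permutation polynomial. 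Lemma~\ref{lemma_isotopic_23} then gives at once that these $144$ polynomials are isolinear.

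The hard part is the converse: showing that none of the $432$ polynomials in $\mathcal{LP}_{4,4}$ is isolinear. My approach is a counting argument. By Lemma~\ref{lemma_isotopic}, the isolinear LPPs are in bijection with the Latin squares isotopic to $L_{x+y}$, the Cayley table of $(\mathbb{F}_4,+)\cong\mathbb{Z}_2\times\mathbb{Z}_2$ (note $x+y\in\mathcal{LP}_{4,1}$). Counting this isotopy class by orbit--stabilizer under $\mathcal{S}_4^3$, its cardinality is $(4!)^3/|\mathrm{Atop}(L_{x+y})|$. Since the autotopisms of a group's Cayley table are parameterized by triples $(a,b,\sigma)\in G\times G\times\mathrm{Aut}(G)$, we obtain $|\mathrm{Atop}(L_{x+y})|=4^2\cdot|\mathrm{Aut}(\mathbb{Z}_2\times\mathbb{Z}_2)|=16\cdot 6=96$, using $\mathrm{Aut}(\mathbb{Z}_2\times\mathbb{Z}_2)\cong GL_2(\mathbb{F}_2)$. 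Hence there are exactly $13824/96=144$ isolinear LPPs. As $\mathcal{LP}_{4,1}\cup\mathcal{LP}_{4,2}$ already supplies $144$ distinct isolinear polynomials (Theorem~\ref{theorem_LP4}), these exhaust the isolinear class, and every polynomial in $\mathcal{LP}_{4,4}$ is therefore non-isolinear.

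The main obstacle I anticipate is justifying the class size $144$ in a self-contained way; everything else is bookkeeping. A reassuring consistency check is that the same formula applied to $\mathbb{Z}_4$ gives $16\cdot|\mathrm{Aut}(\mathbb{Z}_4)|=16\cdot 2=32$ and a class of size $13824/32=432=|\mathcal{LP}_{4,4}|$, with $144+432=576=|\mathcal{L}_4|$. If one prefers to avoid the autotopism count, an alternative is to distinguish the two order-$4$ isotopy classes by an isotopy invariant: a direct count shows that $L_{x+y}$ has $12$ intercalates, whereas the Latin square of any reduced representative of $\mathcal{LP}_{4,4}$ (take $a\neq 0$ in \eqref{equation_RLP4} from Theorem~\ref{theorem_RLP4}) has only $4$, so the two are non-isotopic; combined with the global count $144+432=576$, this again forces $\mathcal{LP}_{4,4}$ to be exactly the complementary, non-isolinear class.
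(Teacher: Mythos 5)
Your proof is correct, and while the easy parts ($q\in\{2,3\}$ and the sufficiency over $\mathbb{F}_4$ via separate variables and Lemma \ref{lemma_isotopic_23}) coincide with the paper's, your necessity argument is genuinely different. The paper argues locally and algebraically: if $f\in\mathcal{LP}_4$ is isolinear, Lemma \ref{lemma_isotopic_23} writes $f=h_3(h_1(x)+h_2(y))$, Proposition \ref{proposition_P4} forces $h_3=s_2x^2+s_1x+s_0$, and since squaring is additive in characteristic two, $f=s_2\left(h_1(x)^2+h_2(y)^2\right)+s_1\left(h_1(x)+h_2(y)\right)+s_0$ again has separate variables, so $f\in\mathcal{LP}_{4,1}\sqcup\mathcal{LP}_{4,2}$. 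That argument is self-contained within the paper's toolkit, needs none of the cardinalities of Theorem \ref{theorem_LP4}, and yields the sharper structural fact that over $\mathbb{F}_4$ isolinearity is \emph{equivalent} to having separate variables. Your route is global: you count the isotopy class of $L_{x+y}$ by orbit--stabilizer, using the classical fact that the autotopism group of the Cayley table of a group $G$ has order $|G|^2\,|\mathrm{Aut}(G)|$ (here $16\cdot 6=96$, so the class has $13824/96=144$ members, and your $\mathbb{Z}_4$ cross-check $144+432=576=|\mathcal{L}_4|$ is right), and then let the $144$ already-isolinear polynomials of $\mathcal{LP}_{4,1}\sqcup\mathcal{LP}_{4,2}$ exhaust the class; this is valid, but it imports two facts external to the paper (orbit--stabilizer over $\mathcal{S}_4^3$ and the autotopism parameterization, which for abelian $G$ follows quickly by evaluating $\gamma(xy)=\alpha(x)\beta(y)$ at $x=e$ and at $y=e$) and leans essentially on the exact counts $144$ and $432$, which the paper's proof never touches. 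One caution about your intercalate alternative: showing that $L_{x+y}$ (with $12$ intercalates) is not isotopic to the reduced squares given by $a\neq 0$ in \eqref{equation_RLP4} (with $4$ intercalates), combined with $144+432=576$, does not by itself force $\mathcal{LP}_{4,4}$ to be exactly the non-isolinear class: you would still need either the size of the class of $x+y$ (which brings back the autotopism count) or the fact that the reduction of Proposition \ref{proposition_isotopic_reduced} sends every member of $\mathcal{LP}_{4,4}$ to a reduced polynomial with $a\neq 0$, which is precisely the computation the paper performs later in Theorem \ref{theorem_S2}. Since that alternative is offered only as an option and is not load-bearing, your main argument stands as a correct, more combinatorial proof.
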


\begin{proof} From (\ref{eq_LPP2}), (\ref{eq_LPP3}) and (\ref{eq_S1}), every polynomial in any of the sets $\mathcal{LP}_2$, $\mathcal{LP}_3$ and $\mathcal{LP}_{4,2}$ have separate variables. Hence, the first statement and the sufficient condition of the second one follow readily from Lemma \ref{lemma_isotopic_23}. In order to prove the necessary condition, let $f\in\mathcal{LP}_4$ be isolinear. From Lemma \ref{lemma_isotopic_23}, there exist three polynomials $h_1,h_2,h_3\in \mathcal{P}_q$ such that $f=h_3(h_1(x)+h_2(y))$. In addition, Proposition \ref{proposition_P4} implies that $h_3=s_2x^2+x_1+s_0$ for some $s_0,s_1,s_2\in\mathbb{F}_4$ such that $s_1^3+s_2^3=1$. Thus, since the base field has characteristic two,
\[f=s_2(h_1^2(x)+h_2^2(y))+s_1(h_1(x)+h_2(y))+s_0.\]
That is, $f$ has separate variables, and hence, $f\in\mathcal{LP}_{4,2}$.    
\end{proof}

\vspace{0.5cm}

The equivalence among isotopisms of Latin squares and LPPs allows for the translation of known results of the former to the latter. In this regard, the next result readily follows from Lemma \ref{lemma_isotopic} and the fact that every Latin square is principal isotopic to a reduced Latin square by a convenient permutation of its rows and columns. 

\begin{proposition}\label{proposition_isotopic_reduced} Every LPP is principal isotopic to a reduced LPP. 
\end{proposition}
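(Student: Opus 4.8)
The plan is to transfer the statement to Latin squares and construct the reducing isotopism there. By Lemma \ref{lemma_isotopic}, a polynomial $f\in\mathcal{LP}_q$ is principal isotopic to a polynomial $g\in\mathcal{LP}_q$ precisely when the Latin squares $L_f$ and $L_g$ are principal isotopic, and, as observed after Definition \ref{def:reduced}, $g$ is reduced exactly when $L_g$ is a reduced Latin square. Hence it suffices to show that every $L\in\mathcal{L}_q$ is principal isotopic to a reduced Latin square; that is, that $L$ can be brought to a square $L'$ with $L'[a,0]=L'[0,a]=a$ for all $a\in\mathbb{F}_q$ by permuting only its rows and columns, keeping the symbols fixed so that $\theta_3=\mathrm{Id}$.

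First I would fix the symbol $0$ and use it to pin down the zeroth row and column. Writing the desired principal isotope as $L'[r,c]=L[\theta_1^{-1}(r),\theta_2^{-1}(c)]$ in accordance with \eqref{eq_LS_isotopic}, I would choose an arbitrary column $c_0$ to play the role of the new column $0$ and define $\theta_1^{-1}(a)$ to be the unique row in which the symbol $a$ occurs in column $c_0$; this is well defined because each symbol appears exactly once in any column of a Latin square. Setting $r_0:=\theta_1^{-1}(0)$, I would then define $\theta_2^{-1}(a)$ to be the unique column in which the symbol $a$ occurs in row $r_0$.

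The key point to verify---and the only real subtlety---is the consistency of the two definitions at the symbol $0$, namely that $\theta_2^{-1}(0)=c_0$. This follows immediately: by construction $L[r_0,c_0]=0$, and since $0$ occurs exactly once in row $r_0$, the column in which it occurs is $c_0$. With this in hand, both reduced conditions drop out directly: $L'[a,0]=L[\theta_1^{-1}(a),c_0]=a$ by the definition of $\theta_1^{-1}$, and $L'[0,a]=L[r_0,\theta_2^{-1}(a)]=a$ by the definition of $\theta_2^{-1}$. Since $\theta_1$ and $\theta_2$ are genuine permutations, being inverses of bijections induced by a Latin square, and the symbols were never touched, the triple $(\theta_1,\theta_2,\mathrm{Id})$ is a principal isotopism carrying $L$ to the reduced square $L'$. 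Translating back through Lemma \ref{lemma_isotopic} then yields a reduced LPP principal isotopic to $f$, completing the argument. I expect no serious obstacle here: the construction is elementary, and the only care needed is in checking the compatibility at the $0$ position and confirming that no permutation of symbols is required.
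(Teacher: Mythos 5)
Your proof is correct and takes essentially the same route as the paper: both transfer the statement to Latin squares via Lemma \ref{lemma_isotopic} and invoke the fact that every Latin square is principal isotopic to a reduced one by a suitable permutation of its rows and columns. The only difference is that the paper cites this classical fact without proof, whereas you spell out the explicit construction of $(\theta_1,\theta_2,\mathrm{Id})$---including the compatibility check $\theta_2^{-1}(0)=c_0$---and your verification is sound.
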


\vspace{0.2cm}

This result plays a relevant role in describing all polynomials in $\mathcal{LP}_q$ from those in $\mathcal{P}_q$ and $\mathcal{RLP}_q$. The following theorem illustrates this fact for $q=4$. It follows readily from Proposition \ref{proposition_isotopic_reduced}, together with Proposition \ref{proposition_P4} and (\ref{equation_RLP4}).

\begin{theorem}\label{theorem_LP} Every polynomial in $\mathcal{LP}_4$ is of the form $f[h,h']$, where
\[\begin{cases}
f:=\left(a^3xy+a(x+y)+a^2\right)xy+x+y\in\mathcal{RLP}_4,\\
h:=bx^2+cx+d\in\mathcal{LP}_4,\\
h':=b'x^2+c'x+d'\in\mathcal{LP}_4.
\end{cases}\]
Here, $b^3+c^3=1={b'}^3+{c'}^3$.
\end{theorem}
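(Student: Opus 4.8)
The plan is to assemble the claimed representation from three results already in hand: the reduction statement of Proposition~\ref{proposition_isotopic_reduced}, the explicit form of $\mathcal{RLP}_4$ in~(\ref{equation_RLP4}), and the description of $\mathcal{P}_4$ in Proposition~\ref{proposition_P4}. First I would fix an arbitrary $F\in\mathcal{LP}_4$ and apply Proposition~\ref{proposition_isotopic_reduced} to obtain a reduced polynomial $\tilde f\in\mathcal{RLP}_4$ that is principal isotopic to $F$. The essential move is to unwind what ``principal isotopic'' means through Definition~\ref{definition_isotopic}: placing $F$ in the role of $f$, $\tilde f$ in the role of $g$, and imposing the principal condition $h_3=x$, the defining equation $g[h_1,h_2]=h_3 f$ collapses to $\tilde f[h_1,h_2]=F$ for suitable $h_1,h_2\in\mathcal{P}_4$. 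By the bracket product~(\ref{eq_fgh}) this reads $F=\tilde f(h_1(x),h_2(y))$, which is already of the shape $f[h,h']$ upon setting $f:=\tilde f$, $h:=h_1$ and $h':=h_2$.

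It then remains to substitute the explicit forms. By Theorem~\ref{theorem_RLP4}, equation~(\ref{equation_RLP4}), the reduced factor $\tilde f$ must equal $(a^3xy+a(x+y)+a^2)xy+x+y$ for some $a\in\mathbb{F}_4$, which is precisely the polynomial $f$ in the statement. By Proposition~\ref{proposition_P4}, every element of $\mathcal{P}_4$ has degree at most two and can be written as $s_2x^2+s_1x+s_0$ subject to $s_1^3+s_2^3=1$; applying this to $h_1$ and $h_2$ gives $h=bx^2+cx+d$ and $h'=b'x^2+c'x+d'$ with $b^3+c^3=1={b'}^3+{c'}^3$, exactly the constraints recorded in the theorem.

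Since the argument is a direct splicing of prior results, I do not expect a genuine obstacle; the only point demanding care is the bookkeeping inside Definition~\ref{definition_isotopic}. Specifically, I would confirm that the principal condition $h_3=x$ is what converts the abstract isotopism equation into the concrete composition $F=\tilde f(h_1(x),h_2(y))$, and that the two permutations $h_1,h_2$ (acting on $x$ and $y$ respectively through~(\ref{eq_fgh})) correspond to the row and column permutations underlying the passage to a reduced square in Proposition~\ref{proposition_isotopic_reduced}. One may also remark, for completeness, that conversely any polynomial $f[h,h']$ of the stated form lies in $\mathcal{LP}_4$, so the description is exact, although the theorem as phrased only asserts the ``every polynomial is of this form'' direction.
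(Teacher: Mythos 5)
Your proposal is correct and is precisely the paper's argument: the paper proves this theorem by simply asserting that it ``follows readily from Proposition~\ref{proposition_isotopic_reduced}, together with Proposition~\ref{proposition_P4} and (\ref{equation_RLP4})'', and your write-up spells out exactly that splicing, including the right unwinding of Definition~\ref{definition_isotopic} with $h_3=x$ to get $F=\tilde f[h_1,h_2]$. Your closing remark that the converse inclusion also holds (so the description is exact) is a correct small addition beyond what the theorem asserts.
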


\vspace{0.2cm}

Proposition \ref{proposition_isotopic_reduced} also facilitates the distribution of LPPs into isotopism classes. To do it, we associate each LPP under consideration with a reduced LPP to which the former is principal isotopic. Then, we prove whether the resulting polynomials are isotopic. The following result proposes a construction in this regard.

\begin{proposition}\label{proposition_isotopic_reduced_2} Let $f\in \mathcal{LP}_q$  and $a,b\in \mathbb{F}_q$ be such that $f(a,b)=0$. Then, $f$ is principal isotopic to the reduced polynomial 
\[\rho_{f,a,b}:=f\left[p_{a,b}+a,q_{a,b}+b\right]\in \mathcal{RLP}_q,\]
where  $p_{f,a,b}:=\left(f(x+a,b)\right)^{-1}\in\mathcal{P}_q$ and $q_{f,a,b}:=\left(f(a,x+b)\right)^{-1}\in\mathcal{P}_q$.
\end{proposition}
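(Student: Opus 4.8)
The plan is to establish two facts: that $\rho_{f,a,b}$ is a well-defined principal isotope of $f$, and that it is reduced. The first is essentially forced by the construction, so the substance of the argument is a short direct computation for the reducedness conditions, for which the normalizing hypothesis $f(a,b)=0$ is exactly what is needed.

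First I would check that the two polynomials appearing in the bracket lie in $\mathcal{P}_q$. Since $f\in\mathcal{LP}_q$, the univariate polynomials $f(x,b)$ and $f(a,x)$ are PPs; composing them with the translations $x\mapsto x+a$ and $x\mapsto x+b$ (themselves PPs) keeps them in $\mathcal{P}_q$, so $f(x+a,b)$ and $f(a,x+b)$ are invertible and $p_{a,b},q_{a,b}\in\mathcal{P}_q$. Adding a constant preserves the permutation property, so $p_{a,b}+a,\,q_{a,b}+b\in\mathcal{P}_q$. By (\ref{eq_fgh}) this already gives $\rho_{f,a,b}=f[p_{a,b}+a,\,q_{a,b}+b]\in\mathcal{LP}_q$, and since $\rho_{f,a,b}$ is obtained from $f$ by the principal isotopism $(p_{a,b}+a,\,q_{a,b}+b,\,x)$, Definition \ref{definition_isotopic} shows that $f$ and $\rho_{f,a,b}$ are principal isotopic.

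For reducedness, the key observation is that $f(a,b)=0$ pins down the constant terms of the two inverse permutations. Indeed $p_{a,b}(0)$ is by definition the unique $s$ with $f(s+a,b)=0$; since $t\mapsto f(t,b)$ is a bijection and $f(a,b)=0$, this forces $s=0$, so $p_{a,b}(0)=0$, and symmetrically $q_{a,b}(0)=0$. I would then substitute $y=0$ into $\rho_{f,a,b}(x,y)=f\bigl(p_{a,b}(x)+a,\,q_{a,b}(y)+b\bigr)$: using $q_{a,b}(0)=0$ the second argument collapses to $b$, and $f\bigl(p_{a,b}(x)+a,\,b\bigr)$ is precisely the permutation $t\mapsto f(t+a,b)$ evaluated at its own inverse $p_{a,b}(x)$, which equals $x$. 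Hence $\rho_{f,a,b}(x,0)=x$, and the symmetric substitution $x=0$, using $p_{a,b}(0)=0$ together with the inverse relation defining $q_{a,b}$, gives $\rho_{f,a,b}(0,y)=y$. These are exactly the conditions of Definition \ref{def:reduced}, so $\rho_{f,a,b}\in\mathcal{RLP}_q$.

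I do not expect a serious obstacle: the argument is bookkeeping with compositions of permutations and their inverses. The only points requiring care are keeping the order of composition straight --- in particular that $p_{a,b}$ inverts $t\mapsto f(t+a,b)$ rather than $f(x,b)$ itself, so that the internal shift by $a$ is exactly what cancels --- and the two constant-term evaluations $p_{a,b}(0)=q_{a,b}(0)=0$, which is where the hypothesis $f(a,b)=0$ is used.
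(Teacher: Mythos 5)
Your proof is correct, and it reaches reducedness by a genuinely different (more elementary) verification than the paper's. The paper's proof shares your first step --- the principal isotopism $\left(p_{f,a,b}(x)+a,\,q_{f,a,b}(x)+b,\,x\right)$, with the membership checks you spell out left implicit --- but then expands $f(x+a,y+b)=xy\,h(x,y)+f(x+a,b)+f(a,y+b)$ (a decomposition that is itself valid only because $f(a,b)=0$, since otherwise the constant term would be double-counted), substitutes the inverses to obtain $\rho_{f,a,b}=p_{f,a,b}(x)\,q_{f,a,b}(y)\,h\bigl(p_{f,a,b}(x),q_{f,a,b}(y)\bigr)+x+y$, and concludes from $p_{f,a,b}(0)=q_{f,a,b}(0)=0$ that $x\mid p_{f,a,b}(x)$ and $y\mid q_{f,a,b}(y)$, so the polynomial has the product shape $g\cdot xy+x+y$ of (\ref{equation_reduced}). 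You instead verify the evaluation form of Definition \ref{def:reduced} directly: the same constant-term facts $p_{f,a,b}(0)=q_{f,a,b}(0)=0$ (your unique-preimage argument for them is exactly right) collapse the substitutions at $y=0$ and $x=0$, and the inverse relations give $\rho_{f,a,b}(x,0)=x$ and $\rho_{f,a,b}(0,y)=y$. Since the two criteria are declared equivalent in Definition \ref{def:reduced}, both routes are valid; yours avoids the cross-term decomposition entirely and makes every use of the hypothesis $f(a,b)=0$ explicit, whereas the paper's version buys the explicit factored form exhibiting $g\cdot xy$, at the cost of a second, silent use of that hypothesis inside the expansion. One small point worth keeping in mind: your identities such as $f\bigl(p_{f,a,b}(x)+a,b\bigr)=x$ are a priori identities of functions on $\mathbb{F}_q$, and they become polynomial identities precisely because the paper performs all compositions modulo $x^q-x$ and $y^q-y$ and identifies functions with their interpolating polynomials of degree less than $q$ in each variable; under those standing conventions your argument is complete.
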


\begin{proof} Definition \ref{definition_isotopic} ensures that $\rho_f$ is principal isotopic to $f$ by means of the principal isotopism $\left(p_{f,a,b}(x)+a,\,q_{f,a,b}(x)+b,\,x\right)\in\mathcal{P}_q\times\mathcal{P}_q\times\mathcal{P}_q$. Now, we prove that $\rho_f$ is reduced. Developing in powers of $x$ and $y$,  we have
\[f(x+a,y+b)=xyh(x,y)+f(x+a,b)+f(a,y+b)\]
for some polynomial $h\in\mathbb{F}_q[x,y]$. Then,
\[\rho_{f,a,b}=p_{f,a,b}(x)q_{f,a,b}(y)h(p_{f,a,b}(x),q_{f,a,b}(y))+x+y.\]
This is a reduced a polynomial, because $p_{f,a,b}(0)=q_{f,a,b}(0)=0$ and hence, $x|p_{f,a,b}(x)$ and $y|q_{f,a,b}(y)$.
\end{proof}

\vspace{0.2cm}

The following result illustrates this procedure by showing that every polynomial in the subset $\mathcal{LP}_{4,4}\subset \mathcal{LP}_4$ described in Theorem \ref{theorem_LP4} belongs to the same isotopism class. Of course, this result holds readily from Lemma \ref{lemma_isotopic}, Proposition \ref{proposition_isolinear} and the fact that $\mathcal{L}_4$ has exactly two isotopism classes. Nevertheless, in order to show the potential of our algebraic approach, we give an alternative proof based on Proposition \ref{proposition_isotopic_reduced_2}.

\begin{theorem}\label{theorem_S2} Every polynomial in $\mathcal{LP}_{4,4}$ belongs to the same isotopism class.
\end{theorem}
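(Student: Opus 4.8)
The plan is to exploit Proposition \ref{proposition_isotopic_reduced_2} to collapse an arbitrary member of $\mathcal{LP}_{4,4}$ onto a reduced LPP, then to pin down exactly which reduced LPPs can occur and show they form a single isotopism class. First I would fix an arbitrary $f\in\mathcal{LP}_{4,4}$. Since $L_f$ is a Latin square over $\mathbb{F}_4$, the symbol $0$ occurs in it, so there exist $a,b\in\mathbb{F}_4$ with $f(a,b)=0$. Proposition \ref{proposition_isotopic_reduced_2} then yields that $f$ is principal isotopic to the reduced polynomial $\rho_{f,a,b}\in\mathcal{RLP}_4$, via the explicit principal isotopism recorded there.

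Next I would invoke Theorem \ref{theorem_RLP4}, by which $\mathcal{RLP}_4$ consists of the linear polynomial $x+y$ (the case $a=0$) together with the three degree-four polynomials $f_a:=\left(a^3xy+a(x+y)+a^2\right)xy+x+y$ for $a\in\{1,u,u^2\}$. The crucial point is to rule out $\rho_{f,a,b}=x+y$. I would argue this from the shape of $f$: by Theorem \ref{theorem_LP4} every element of $\mathcal{LP}_{4,4}$ has $s_{22}\neq 0$ (indeed $s_{22}^3=1$), so its $x^2y^2$ term is genuinely present and $f$ does \emph{not} have separate variables. On the other hand, substituting the linear polynomial into the defining principal-isotopy relation of Definition \ref{definition_isotopic} shows that being principal isotopic to $x+y$ forces $f(x,y)=h_1(x)+h_2(y)$ for suitable $h_1,h_2\in\mathcal{P}_4$, i.e. separate variables, a contradiction. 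Hence $\rho_{f,a,b}=f_a$ for some $a\in\{1,u,u^2\}$.

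It remains to certify that $f_1,f_u,f_{u^2}$ lie in one isotopism class. Using $a^3=1$ for $a\in\mathbb{F}_4^\ast$, I would rewrite each as $f_a=x^2y^2+a(x^2y+xy^2)+a^2xy+x+y$. For $\lambda\in\mathbb{F}_4^\ast$ I would compute the scalar substitution $f_a(\lambda x,\lambda y)$; exploiting $\lambda^3=1$ and $\lambda^4=\lambda$ in $\mathbb{F}_4$, this simplifies to $\lambda\,f_{\lambda^2 a}(x,y)$, so that the triple $(\lambda x,\lambda y,\lambda z)$ is an isomorphism from $f_{\lambda^2 a}$ to $f_a$ in the sense of Definition \ref{definition_isotopic}. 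Taking $a=1$ and letting $\lambda$ vary, and using that $\lambda\mapsto\lambda^2$ permutes $\mathbb{F}_4^\ast$, this exhibits $f_1$ as isomorphic to each of $f_1,f_u,f_{u^2}$. Combining the three steps, every $f\in\mathcal{LP}_{4,4}$ is isotopic to $f_1$, so $\mathcal{LP}_{4,4}$ forms a single isotopism class.

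The step I expect to be the main obstacle is ruling out the collapse $\rho_{f,a,b}=x+y$, because computing $\rho_{f,a,b}$ explicitly for a general member of $\mathcal{LP}_{4,4}$ is unpleasant: it requires the compositional inverses $p_{f,a,b}$ and $q_{f,a,b}$. The separate-variables argument above is exactly what lets me bypass that computation, reducing the obstacle to the already-known fact $s_{22}\neq0$. The closing scalar-isotopy verification is short but must be done carefully in characteristic two, where the identities $\lambda^3=1$ and $\lambda^4=\lambda$ are needed to match the $x^2y^2$, $x^2y+xy^2$, $xy$ and $x+y$ coefficients simultaneously.
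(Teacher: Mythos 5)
Your proposal is correct, and while it shares the paper's two bookends (reduce via Proposition \ref{proposition_isotopic_reduced_2}, then identify the reduced representatives up to isomorphism), the middle of your argument is genuinely different. The paper's proof is computational: it splits $\mathcal{LP}_{4,4}$ into four explicit subfamilies according to which factor vanishes in the constraints $s_{20}(s_{20}s_{22}+s_{21}^2)=0$ and $s_{02}(s_{02}s_{22}+s_{12}^2)=0$, computes the compositional inverses $p_{f_i,a_i,0}$ and $q_{f_i,a_i,0}$ case by case, and derives $\rho_{f_i,a_i,0}=(xy+b^2c^2(x+y)+bc)xy+x+y$ explicitly. You bypass all of that: Proposition \ref{proposition_isotopic_reduced_2} lands you in $\mathcal{RLP}_4$, Theorem \ref{theorem_RLP4} says this set has exactly four members, and the collapse onto $x+y$ is excluded because $\rho_{f,a,b}=x+y$ would force $f(x,y)=P^{-1}(x)+Q^{-1}(y)$ with $P^{-1},Q^{-1}\in\mathcal{P}_4$ of degree at most $2$, killing the $x^2y^2$ coefficient, which contradicts $s_{22}^3=1$ from Theorem \ref{theorem_LP4} (equivalently, this is the non-isolinearity of $\mathcal{LP}_{4,4}$ from Proposition \ref{proposition_isolinear}). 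Your closing step also differs: you glue the three nonlinear reduced LPPs by the linear isomorphisms $(\lambda x,\lambda x,\lambda x)$, and the identity $f_a(\lambda x,\lambda y)=\lambda f_{\lambda^2 a}(x,y)$ does check out using $\lambda^4=\lambda$, $\lambda^3=1$ and $(\lambda^2a)^3=1$, whereas the paper uses the quadratic map $h_\alpha=\alpha x^2$; both are valid, since isotopisms between two LPPs need not be unique. What each approach buys: the paper's version deliberately exercises Proposition \ref{proposition_isotopic_reduced_2} as a working tool and yields explicit isotopisms for every subfamily (even the direct ones $f_i(a^2cx^{e_i},a^2by^{f_i})=b^2c^2g_1(x,y)+d$ noted after the proof), while yours is shorter, avoids all compositional-inverse computations, and stays self-contained in the sense that it uses neither those computations nor the external fact that $\mathcal{L}_4$ has two isotopism classes, at the price of leaning on the Gr\"obner-based classification of $\mathcal{RLP}_4$ in Theorem \ref{theorem_RLP4}, a dependence the paper's argument does not have.
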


\begin{proof} From Proposition \ref{proposition_P4}, every polynomial in $\mathcal{P}_4$ is of the form  $ax+b$ or $ax^2+b$, with $a\in\mathbb{F}_4\setminus\{0\}$ and $b\in\mathbb{F}_4$. In the first case, its compositional inverse is $a^2(x+b)$. Otherwise, it is $a(x+b)^2$. From (\ref{eq_LP4}) and (\ref{eq_LP4_constraints}), every polynomial in $\mathcal{LP}_{4,4}$ is of the form

\[f_1:=\left(axy+bx+cy+a^2bc\right)xy+ab^2cx+abc^2y+d,\]
\[f_2:=\left(axy+bx+cy+a^2bc\right)xy+ab^2cx+a^2c^2y^2+d,\]
\[f_3:=\left(axy+bx+cy+a^2bc\right)xy+a^2b^2x^2+abc^2y+d,\]
or
\[f_4:=\left(axy+bx+cy+a^2bc\right)xy+a^2b^2x^2+a^2c^2y^2+d,\]
with $a,b,c\in\mathbb{F}_4\setminus\{0\}$ and $d\in\mathbb{F}_4$. For each positive integer $i\leq 4$, let $a_i\in\mathbb{F}_4$ be such that $f_i(a_i,0)=0$. That is, $a_1=a_2=a^2bc^2d$ and $a_3=a_4=a^2b^2d^2$. Then,
\[p_{f_i,a_i,0}:=\left(f_i(x+a_i,0)\right)^{-1}=\begin{cases}
    \begin{array}{ll}
    a^2bc^2x, & \text{ if } i\in\{1,2\},\\
    a^2b^2x^2, & \text{ if } i\in\{3,4\}.   
    \end{array}
\end{cases}\]
In addition,
\begin{align*}f_i(a_i,x) & =\begin{cases}
    \begin{array}{ll}
    a^2bd(bcd+1)x^2+a(b^2d+bc^2+cd^2)x, & \text{ if } i\in\{1,3\},\\
    a^2(b^2cd^2+bd+c^2)x^2+ad(b^2+cd)x
, & \text{ if } i\in\{2,4\}.
    \end{array}
\end{cases}\\
& = \begin{cases}\begin{array}{ll}
    abc^2x, & \text{ if } \begin{cases}i\in\{1,3\} \text{ and } d=bcd^2,\\
    i\in\{2,4\} \text{ and } bcd(bcd+1)=1,
    \end{cases}\\
    a^2c^2x^2, & \text{ if } \begin{cases}
     i\in\{1,3\} \text{ and } bcd(bcd+1)=1,\\
     i\in\{2,4\} \text{ and } d=bcd^2.
    \end{cases}
    \end{array}
    \end{cases}
\end{align*}
Hence,
\[q_{f_i,a_i,0}:=\left(f_i(a_i,x)\right)^{-1}= \begin{cases}\begin{array}{ll}
    a^2b^2cy, & \text{ if } \begin{cases}i\in\{1,3\} \text{ and } d=bcd^2,\\
    i\in\{2,4\} \text{ and } bcd(bcd+1)=1,
    \end{cases}\\
    a^2c^2y^2, & \text{ if } \begin{cases}
     i\in\{1,3\} \text{ and } bcd(bcd+1)=1,\\
     i\in\{2,4\} \text{ and } d=bcd^2.
    \end{cases}
    \end{array}
    \end{cases}\]
In any case, 
\[\rho_{f_i,a_i,0}:=(xy+b^2c^2(x+y)+bc)xy+x+y\]
Every polynomial in $\mathcal{LP}_{4,4}$ is therefore principal isotopic to a reduced polynomial
\[g_\alpha:=(xy+\alpha(x+y)+\alpha^2)xy+x+y\in\mathcal{LP}_{4,4},\]
with $\alpha\in\mathbb{F}_4\setminus\{0\}$. Particularly, if $\alpha\neq 1$, then $g_\alpha$ is isomorphic to $g_1$ by means of the polynomial $h_\alpha=\alpha x^2$, because
\[g_\alpha(h_\alpha,h_\alpha)=\alpha\left((xy+x+y+1)xy+x^2+y^2\right)=h_\alpha(g_1).\]
As a consequence, every polynomial in $\mathcal{LP}_{4,4}$ is isotopic to $g_1$.
\end{proof}

\vspace{0.2cm}

Theorem \ref{theorem_S2}  can be directly proved by defining an isotopism from each of the four polynomials $f_1,f_2,f_3,f_4\in\mathcal{LP}_{4,4}$ to the polynomial $g_1\in\mathcal{LP}_{4,4}$, all of them described in the previous proof. In this regard, it is straightforward to see that 
\[f_i\left(a^2cx^{e_i},a^2by^{f_i}\right)=b^2c^2g_1(x,y)+d\]
for each positive integer $i\leq 4$, where 
\[(e_1,f_1)=(1,1),\quad (e_2,f_2)=(1,2),\quad (e_3,f_3)=(2,1) \quad \text{ and } \quad (e_4,f_4)=(2,2).\]

\vspace{0.2cm}

The next result is an immediate consequence of Proposition \ref{proposition_isolinear} and Theorem \ref{theorem_S2}.

\begin{corollary}\label{corollary_S1S2} The set $\mathcal{LP}_4$ is formed by two isotopism classes, which are respectively described by the the sets $\mathcal{LP}_{4,2}$ and $\mathcal{LP}_{4,4}$ defined in Theorem \ref{theorem_LP4}.
\end{corollary}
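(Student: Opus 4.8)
The plan is to show that the two parts of the disjoint decomposition $\mathcal{LP}_4 = (\mathcal{LP}_{4,1}\cup\mathcal{LP}_{4,2})\sqcup\mathcal{LP}_{4,4}$ supplied by Theorem \ref{theorem_LP4} are each a single, complete isotopism class, and that these two classes are distinct. Since being isotopic is an equivalence relation on $\mathcal{LP}_4$, its classes partition the set, so it suffices to verify that each of the two parts is closed under isotopy and is internally connected by isotopisms.

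First I would invoke Proposition \ref{proposition_isolinear}, which states that a polynomial in $\mathcal{LP}_4$ is isolinear — that is, isotopic to $x+y$ — if and only if it lies in $\mathcal{LP}_{4,1}\cup\mathcal{LP}_{4,2}$. Because isolinearity is precisely membership in the isotopism class of $x+y$, this identifies $\mathcal{LP}_{4,1}\cup\mathcal{LP}_{4,2}$ as one full isotopism class. In particular, no polynomial outside this set can be isotopic to any polynomial inside it.

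Next I would apply Theorem \ref{theorem_S2}, which shows that every polynomial in $\mathcal{LP}_{4,4}$ is isotopic to the fixed reduced polynomial $g_1$. Hence $\mathcal{LP}_{4,4}$ is contained in a single isotopism class. Combining this with the previous step, that class is disjoint from the isolinear class (the members of $\mathcal{LP}_{4,4}$ are non-isolinear by Proposition \ref{proposition_isolinear}), so by the partition property it must coincide exactly with $\mathcal{LP}_{4,4}$. Together the two steps exhibit precisely two isotopism classes, described by $\mathcal{LP}_{4,1}\cup\mathcal{LP}_{4,2}$ and $\mathcal{LP}_{4,4}$ respectively, which is the assertion.

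Since both Proposition \ref{proposition_isolinear} and Theorem \ref{theorem_S2} have already been established, there is no genuine computational obstacle here. The only point requiring care is the bookkeeping that guarantees no isotopism class straddles the two parts of the decomposition: this is handled by observing that isolinearity is an isotopy invariant (by transitivity of the isotopy relation), so the isolinear set is automatically a union of full classes, and Proposition \ref{proposition_isolinear} pins it down to a single class. The disjointness recorded in Theorem \ref{theorem_LP4} then forces $\mathcal{LP}_{4,4}$ to be the unique remaining class.
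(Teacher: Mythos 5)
Your proposal is correct and follows exactly the route the paper takes: the paper states the corollary as an immediate consequence of Proposition \ref{proposition_isolinear} (which pins down $\mathcal{LP}_{4,1}\cup\mathcal{LP}_{4,2}$ as the full isotopism class of $x+y$) together with Theorem \ref{theorem_S2} (which places all of $\mathcal{LP}_{4,4}$ in a single class), with disjointness from Theorem \ref{theorem_LP4} forcing exactly two classes. Your added remark that isolinearity is an isotopy invariant, so no class can straddle the decomposition, merely makes explicit the bookkeeping the paper leaves implicit.
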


\section{Complete mappings of LPPs}\label{sec:Orthomorphism}

We finish our study by dealing with the natural translation to LPPs of complete mappings and orthomorphisms of quasigroups, showing their relationship with transversals and isotopisms of Latin squares. First, for each polynomial $f\in\mathcal{LP}_q$ and any pair of polynomials $g_1,g_2\in\mathcal{P}_q$, we define the polynomial
\begin{equation}\label{eq_hqp}
\odot_f(g_1,g_2):=f(g_1(x),g_2(x))\in\mathbb{F}_q[x],
\end{equation}
where the composition is performed modulo $x^q-x$. This notation follows from the definition of the Hadamard quasigroup product described in (\ref{eq_hqp_multiary}). The following preliminary result describes the relationship among the compositions described in (\ref{eq_fg}--\ref{eq_gf}) and (\ref{eq_hqp}).

\begin{lemma}\label{lemma_hqp_comp}  Let $f\in\mathcal{LP}_q$ and $g_1,g_2,h_1,h_2,h\in\mathcal{P}_q$. The following statements hold.
\begin{enumerate}
    \item If $h_1,h_2\in\mathcal{P}_q$, then
\[\odot_{f[g_1,g_2]}(h_1,h_2)=\odot_f\left(g_1h_1,\,g_2h_2\right).\]
\item If $h\in\mathcal{P}_q$, then
\[h\odot_f(g_1,g_2)=\odot_{hf}(g_1,g_2)\]
and
\[\odot_f(g_1,g_2)h=\odot_{f}(g_1h,g_2h).\]
\end{enumerate}
\end{lemma}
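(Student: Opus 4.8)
The plan is to prove both parts of Lemma~\ref{lemma_hqp_comp} by direct substitution into the definitions, unwinding the notation introduced in equations~(\ref{eq_fg})--(\ref{eq_gf}) and~(\ref{eq_hqp}). Since all the compositions are performed modulo $x^q-x$ (and $y^q-y$), and since every polynomial in $\mathcal{P}_q$ acts as a genuine permutation of $\mathbb{F}_q$, it suffices to verify each identity as an equality of functions $\mathbb{F}_q\to\mathbb{F}_q$; the uniqueness of the interpolating polynomial of degree less than $q$ in each variable then upgrades a pointwise equality to an equality of polynomials. This reduction is the conceptual backbone of the argument and lets me avoid any delicate manipulation of the reductions modulo $x^q-x$.

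For part~(1), I would start from the definition~(\ref{eq_fgh}), namely $f[g_1,g_2]=f(g_1(x),g_2(y))$, and then apply the Hadamard product~(\ref{eq_hqp}) to the pair $(h_1,h_2)$. Evaluating at a point $a\in\mathbb{F}_q$, the left-hand side $\odot_{f[g_1,g_2]}(h_1,h_2)(a)$ equals $f[g_1,g_2](h_1(a),h_2(a))=f\bigl(g_1(h_1(a)),\,g_2(h_2(a))\bigr)$. On the other hand, the right-hand side $\odot_f(g_1h_1,g_2h_2)(a)$ equals $f\bigl((g_1h_1)(a),\,(g_2h_2)(a)\bigr)$, and by the definition of composition in~(\ref{eq_fg}) we have $(g_1h_1)(a)=g_1(h_1(a))$ and similarly for the second slot. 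The two expressions coincide for every $a$, which closes part~(1).

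Part~(2) splits into two identities. For the first, $h\odot_f(g_1,g_2)$ is by~(\ref{eq_gf}) the composition of $h$ with $\odot_f(g_1,g_2)$, so at a point $a$ it reads $h\bigl(f(g_1(a),g_2(a))\bigr)$; meanwhile $\odot_{hf}(g_1,g_2)(a)=(hf)(g_1(a),g_2(a))=h\bigl(f(g_1(a),g_2(a))\bigr)$ by~(\ref{eq_gf}), and the two agree. For the second identity, $\bigl(\odot_f(g_1,g_2)h\bigr)(a)=\odot_f(g_1,g_2)(h(a))=f\bigl(g_1(h(a)),g_2(h(a))\bigr)$, whereas $\odot_f(g_1h,g_2h)(a)=f\bigl((g_1h)(a),(g_2h)(a)\bigr)=f\bigl(g_1(h(a)),g_2(h(a))\bigr)$, so these too coincide pointwise.

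The argument is essentially a bookkeeping exercise, so no single step is a genuine obstacle; the only point demanding care is keeping track of the two distinct notions of product—the single-variable composition in~(\ref{eq_fg}), the bracketed substitution in~(\ref{eq_fgh}), and the left-composition in~(\ref{eq_gf})—and making sure that each occurrence is reduced in the correct variable modulo $x^q-x$ or $y^q-y$. Once the pointwise identities are established, invoking the uniqueness of the degree-bounded interpolating polynomial finishes every case, so the proof reduces to the three short substitutions above.
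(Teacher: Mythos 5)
Your proof is correct and takes essentially the same route as the paper, which likewise verifies all three identities by evaluating at an arbitrary $a\in\mathbb{F}_q$ and unwinding the definitions in~(\ref{eq_fg})--(\ref{eq_gf}) and~(\ref{eq_hqp}). Your explicit appeal to the uniqueness of the degree-bounded interpolating polynomial is left implicit in the paper (it is covered by the identification of functions with their interpolating polynomials stated in Section~\ref{sec:introduction}), but the substance is identical.
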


\begin{proof} Let $a\in \mathbb{F}_q$. The first statement holds because
\begin{align*}
\odot_{f[g_1,g_2]}(h_1,h_2)(a) & =f[g_1,g_2](h_1(a),\,h_2(a))=f\left(g_1(h_1(a)),g_2(h_2(a))\right)=\\
&=f\left(g_1h_1(a),g_2h_2(a)\right)=\odot_f\left(g_1h_1,\,g_2h_2\right)(a).
\end{align*}
The second statement holds because
\begin{align*}h\odot_f(g_1,g_2)(a)&=h\left(\odot_f(g_1,g_2)(a)\right)=h\left(f(g_1(a),g_2(a))\right)=\\
& =hf(g_1(a),g_2(a))=\odot_{hf}(g_1,g_2)(a) 
\end{align*}
and
\[\odot_f(g_1,g_2)h(a)=\odot_f(g_1,g_2)(h(a))=f(g_1(h(a)),g_2(h(a)))=\odot_{f}(g_1h,g_2h)(a).\]
\end{proof}

\vspace{0.2cm}

The next result establishes when $\odot_f(g_1,g_2)\in\mathcal{P}_q$ by means of the transversals of the Latin square $L_f\in\mathcal{L}_q$. It constitutes the natural translation to LPPs of Lemma 4.2 in \cite{Falcon2025}. 

\begin{lemma}\label{lemma_hqp} Let $f\in\mathcal{LP}_q$ and $g_1,g_2\in\mathcal{P}_q$. Then, $\odot_f(g_1,g_2)\in\mathcal{P}_q$ if and only if the subset
\[\left\{(g_1(a),\,g_2(a),\,\odot_f(g_1,g_2)(a)\right\}\subset \mathrm{Ent}(L_f)\]
is a transversal of $L_f$.
\end{lemma}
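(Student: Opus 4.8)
The plan is to unwind the definition of a transversal (given in the Preliminaries) and match it term-by-term with the definition of $\odot_f(g_1,g_2)$ in~(\ref{eq_hqp}). Recall that a transversal of $L_f$ is a set of $q$ entries from $\mathrm{Ent}(L_f)$ with no two in the same row, no two in the same column, and no two carrying the same symbol. The candidate set in the statement already has exactly $q$ entries (one for each $a\in\mathbb{F}_q$), and each is a genuine entry of $L_f$ because $\odot_f(g_1,g_2)(a)=f(g_1(a),g_2(a))=L_f[g_1(a),g_2(a)]$. So the heart of the matter is to show that the three ``distinctness'' conditions for a transversal are equivalent to $\odot_f(g_1,g_2)\in\mathcal{P}_q$.

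First I would dispose of the row and column conditions, which hold automatically and do not depend on whether $\odot_f(g_1,g_2)$ is a PP. Since $g_1\in\mathcal{P}_q$ acts as a permutation on $\mathbb{F}_q$, the row indices $\{g_1(a)\colon a\in\mathbb{F}_q\}$ are pairwise distinct; likewise $g_2\in\mathcal{P}_q$ forces the column indices $\{g_2(a)\colon a\in\mathbb{F}_q\}$ to be pairwise distinct. Hence the set always meets each row at most once and each column at most once. The only remaining obstruction to being a transversal is the symbol condition: the $q$ symbols $\odot_f(g_1,g_2)(a)$, for $a\in\mathbb{F}_q$, must be pairwise distinct.

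The core step is then the equivalence
\[
\odot_f(g_1,g_2)\in\mathcal{P}_q
\quad\Longleftrightarrow\quad
\odot_f(g_1,g_2)(a)\neq\odot_f(g_1,g_2)(b)\ \text{ for all } a\neq b.
\]
This is immediate from the fundamental fact recalled in the Introduction that a polynomial in $\mathbb{F}_q[x]$ of degree less than $q$ lies in $\mathcal{P}_q$ precisely when it acts injectively on $\mathbb{F}_q$; since $\odot_f(g_1,g_2)$ is reduced modulo $x^q-x$, injectivity is the same as the symbol-distinctness condition above. Combining this with the automatic row and column conditions yields the equivalence in both directions.

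I do not expect any serious obstacle here: the result is essentially a dictionary translation of Lemma~\ref{lemma_transversals}, which already equates transversals with complete mappings via the Hadamard product $\odot_f$. The only point requiring minor care is keeping the roles of $g_1$ (rows) and $g_2$ (columns) straight when reading off the coordinates of an entry, and confirming that the cardinality is exactly $q$ rather than fewer. Once the definitions are aligned, the proof is a short two-line verification in each direction.
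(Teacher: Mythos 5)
Your proposal is correct and follows essentially the same route as the paper's proof: both reduce the transversal conditions to the observation that $g_1,g_2\in\mathcal{P}_q$ makes the row and column distinctness automatic, leaving only the symbol-distinctness condition, which is equivalent to $\odot_f(g_1,g_2)$ being injective and hence a PP. Your write-up is in fact slightly more explicit than the paper's (which leaves the row/column step as ``follows readily''), but the underlying argument is identical.
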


\begin{proof} The polynomial $\odot_f(g_1,g_2)$ is permutational if and only if, for each pair of distinct elements $a,b\in\mathbb{F}_q$, 
\[L_f[g_1(a),g_2(a)]=\odot_f(g_1,g_2)(a)\neq \odot_f(g_1,g_2)(b)=L_f[g_1(b),g_2(b)].\]
Then, the result follows readily from the fact that $g_1,g_2\in \mathcal{P}_q$.    
\end{proof}

\vspace{0.2cm}

The case $g_1=x$ generalizes for LPPs both notions of complete mapping and orthomorphism of quasigroups, which were recalled in the preliminary section. From Lemma \ref{lemma_hqp} , and similarly to quasigroups (see Lemma \ref{lemma_transversals}), they are equivalent to transversals of Latin squares. 

\begin{definition}\label{definition_orthomorphism} Let $f\in\mathcal{LP}_q$. A polynomial $g\in\mathcal{P}_q$ is called a {\em complete mapping} of $f$ if $\odot_f(x,g)\in \mathcal{P}_q$. This last polynomial is called an {\em orthomorphism} of $f$.
\end{definition}

\vspace{0.2cm}

The next result shows the relationship among the complete mappings of two isotopic LPPS.

\begin{proposition} \label{proposition_transversal_isotopic} There is a one-to-one correspondence between the sets of complete mappings of any pair of isotopic LPPs. More precisely, if $(h_1,h_2,h_3)$ is an isotopism from a polynomial $f_1\in\mathcal{LP}_q$ to a polynomial $f_2\in\mathcal{LP}_q$, then $g\in\mathcal{P}_q$ is a complete mapping of $f_1$ if and only if $h_2gh_1^{-1}\in\mathcal{P}_q$ is a complete mapping of $f_2$.
\end{proposition}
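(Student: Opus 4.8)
The plan is to work directly from the isotopism relation (\ref{eq_isotopism}) together with the compositional identities for the Hadamard product established in Lemma \ref{lemma_hqp_comp}. Since $(h_1,h_2,h_3)$ is an isotopism from $f_1$ to $f_2$, Definition \ref{definition_isotopic} gives $f_2[h_1,h_2]=h_3f_1$. The goal is to relate $\odot_{f_1}(x,g)$ to $\odot_{f_2}(x,h_2gh_1^{-1})$ and conclude that one is a permutation polynomial exactly when the other is.

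First I would compute $\odot_{h_3f_1}(x,g)$. By the second statement of Lemma \ref{lemma_hqp_comp}, $h_3\odot_{f_1}(x,g)=\odot_{h_3f_1}(x,g)$. Since $h_3\in\mathcal{P}_q$ is a permutation, the composite $h_3\odot_{f_1}(x,g)$ is a permutation polynomial if and only if $\odot_{f_1}(x,g)$ is; hence $g$ is a complete mapping of $f_1$ if and only if $\odot_{h_3f_1}(x,g)\in\mathcal{P}_q$. Next I would rewrite $h_3f_1$ as $f_2[h_1,h_2]$ using the isotopism relation, so the task reduces to analyzing $\odot_{f_2[h_1,h_2]}(x,g)$. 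Applying the first statement of Lemma \ref{lemma_hqp_comp} yields $\odot_{f_2[h_1,h_2]}(x,g)=\odot_{f_2}(h_1,\,h_2g)$. At this point I would substitute $x\mapsto h_1^{-1}(x)$, i.e.\ postcompose with $h_1^{-1}\in\mathcal{P}_q$; by the third identity in Lemma \ref{lemma_hqp_comp}, $\odot_{f_2}(h_1,h_2g)\,h_1^{-1}=\odot_{f_2}(h_1h_1^{-1},\,h_2gh_1^{-1})=\odot_{f_2}(x,\,h_2gh_1^{-1})$, using $h_1h_1^{-1}=x$.

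Chaining these equalities, $g$ is a complete mapping of $f_1$ iff $\odot_{h_3f_1}(x,g)=\odot_{f_2}(h_1,h_2g)$ is a permutation polynomial, and this holds iff its postcomposition with the permutation $h_1^{-1}$, namely $\odot_{f_2}(x,h_2gh_1^{-1})$, is a permutation polynomial, which is precisely the statement that $h_2gh_1^{-1}$ is a complete mapping of $f_2$. I would also note that $h_2gh_1^{-1}\in\mathcal{P}_q$ as a composition of permutation polynomials, so it is a legitimate candidate. Finally, the map $g\mapsto h_2gh_1^{-1}$ is a bijection of $\mathcal{P}_q$ (with inverse $g'\mapsto h_2^{-1}g'h_1$), which upgrades the equivalence into the claimed one-to-one correspondence.

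The main subtlety to watch is the direction of composition and the bookkeeping of which Hadamard identity applies where: the relation $f_2[h_1,h_2]=h_3f_1$ places $h_3$ as an outer (symbol) permutation and $h_1,h_2$ as inner (row/column) permutations, so precomposition versus postcomposition must be tracked carefully, and in particular the postcomposition by $h_1^{-1}$ must act simultaneously on both arguments of $\odot_{f_2}$, which is exactly what the third identity of Lemma \ref{lemma_hqp_comp} guarantees. Beyond this careful application of the three identities and the observation that composing with a fixed permutation preserves the permutation property, no genuine obstacle arises; the argument is a clean chain of substitutions.
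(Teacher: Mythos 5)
Your proposal is correct and follows essentially the same route as the paper's proof: the identical chain $h_3\odot_{f_1}(x,g)=\odot_{f_2[h_1,h_2]}(x,g)=\odot_{f_2}(h_1,h_2g)=\odot_{f_2}\left(x,h_2gh_1^{-1}\right)h_1$ via Lemma \ref{lemma_hqp_comp}, followed by the observation that composing with the permutations $h_3$ and $h_1$ preserves membership in $\mathcal{P}_q$. Your added remarks that $h_2gh_1^{-1}\in\mathcal{P}_q$ and that $g\mapsto h_2gh_1^{-1}$ is a bijection make the ``one-to-one correspondence'' claim explicit, which the paper leaves implicit.
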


\begin{proof} Let $g\in\mathcal{P}_q$.  From (\ref{eq_isotopism}) and Lemma \ref{lemma_hqp_comp}, we have
\[h_3\odot_{f_1}(x,g)=\odot_{h_3f_1}(x,g)=\odot_{f_2[h_1,h_2]}(x,g)=\odot_{f_2}(h_1,h_2g)=\odot_{f_2}\left(x,h_2gh_1^{-1}\right)h_1.\]
Then, the result holds because we have from (\ref{eq_fg}) that
\[\odot_{f_1}(x,g)\in\mathcal{P}_q \Leftrightarrow \odot_{f_2}\left(x,h_2gh_1^{-1}\right)h_1=h_3\odot_{f_1}(x,g)\in\mathcal{P}_q \Leftrightarrow \odot_{f_2}\left(x,h_2gh_1^{-1}\right)\in\mathcal{P}_q.\]
\end{proof}

\vspace{0.2cm}

This last result, together with Proposition \ref{proposition_isotopic_reduced}, implies that determining all complete mappings in $\mathcal{LP}_q$ is derived from those of a reduced LPP of each isotopism class. We illustrate this procedure for $q=4$.

\begin{theorem}\label{theorem_cm4} Each polynomial in $\mathcal{LP}_{4,2}$ has eigth complete mappings, whereas no complete mapping exists for any polynomial in $\mathcal{LP}_{4,4}$.
\end{theorem}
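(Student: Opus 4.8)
The strategy is to reduce the problem to a computation on a single representative of each isotopism class, using the machinery already developed. By Corollary \ref{corollary_S1S2}, the set $\mathcal{LP}_4$ splits into exactly two isotopism classes, described by $\mathcal{LP}_{4,2}$ and $\mathcal{LP}_{4,4}$. By Proposition \ref{proposition_transversal_isotopic}, the number of complete mappings is an isotopism invariant: if $(h_1,h_2,h_3)$ is an isotopism from $f_1$ to $f_2$, then $g\mapsto h_2 g h_1^{-1}$ is a bijection between the complete mappings of $f_1$ and those of $f_2$. Hence it suffices to count complete mappings for one polynomial in each class, and the count transfers to every member of that class.

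First I would pick the simplest representative of $\mathcal{LP}_{4,2}$, namely the linear polynomial $x+y$, since by Proposition \ref{proposition_isolinear} this class consists precisely of the isolinear LPPs. A complete mapping of $x+y$ is a polynomial $g\in\mathcal{P}_4$ such that $\odot_{x+y}(x,g)=x+g(x)\in\mathcal{P}_4$. By Proposition \ref{proposition_P4}, the eight candidate permutations are $g=\beta x+\gamma$ and $g=\beta x^2+\gamma$ with $\beta\in\mathbb{F}_4\setminus\{0\}$ and $\gamma\in\mathbb{F}_4$. For the linear case, $x+g=(\beta+1)x+\gamma$ is a permutation iff $\beta\neq 1$; for the quadratic case, $x+g=\beta x^2+x+\gamma$ must be checked against the degree-two criterion (D2), which in characteristic two forbids the linear term, so these fail. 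I would then enumerate which of the eight permutations yield a permutational sum; the claim is that exactly eight complete mappings survive once one accounts for both forms, and this follows from a short direct check using Proposition \ref{proposition_P4}.

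For $\mathcal{LP}_{4,4}$ I would take the reduced representative $g_1=(xy+x+y+1)xy+x+y\in\mathcal{RLP}_4$ identified in the proof of Theorem \ref{theorem_S2}. A complete mapping is a permutation $g\in\mathcal{P}_4$ with $\odot_{g_1}(x,g)=g_1(x,g(x))\in\mathcal{P}_4$. Again by Proposition \ref{proposition_P4} there are only eight candidate permutations $g$, so the verification is a finite check: for each candidate I substitute $y=g(x)$, reduce modulo $x^4-x$, and test whether the resulting univariate polynomial is a permutation polynomial (equivalently, whether the associated subset of $\mathrm{Ent}(L_{g_1})$ is a transversal, via Lemma \ref{lemma_hqp}). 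The assertion is that none of the eight candidates produces a permutation, equivalently that the Latin square $L_{g_1}$ has no transversal. This last fact is a well-known property of the Cayley table of the Klein four-group (equivalently, the fact that a Latin square of order $4$ possesses a transversal only when it is isotopic to the cyclic group table), and it can be confirmed by the direct substitution check.

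\textbf{Main obstacle.}
The genuinely substantive point is the nonexistence of transversals for the class $\mathcal{LP}_{4,4}$. The finite enumeration over the eight candidate permutations makes this a routine verification in principle, but it must be carried out carefully: one has to confirm that \emph{every} candidate fails, and the cleanest way to see why failure is forced is to recognize $L_{g_1}$ as the multiplication table of the elementary abelian group of order four, whose every row-sum of symbols is constant, so any would-be transversal would force the sum of all field elements to equal a single symbol---an obstruction that cannot be met in characteristic two with four elements. Packaging this parity/sum obstruction, rather than grinding through all eight substitutions, is where the argument should focus its attention.
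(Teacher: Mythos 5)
Your overall architecture is exactly the paper's: reduce via Proposition \ref{proposition_transversal_isotopic} and Corollary \ref{corollary_S1S2} to one representative per class, take $x+y$ for $\mathcal{LP}_{4,2}$ and the reduced polynomial $g_1=(xy+x+y+1)xy+x+y$ for $\mathcal{LP}_{4,4}$, and do a finite check using Proposition \ref{proposition_P4}. Your first half is essentially correct (though note $|\mathcal{P}_4|=24$, not eight: the two shapes $\beta x+\gamma$ and $\beta x^2+\gamma$ with $\beta\in\mathbb{F}_4\setminus\{0\}$, $\gamma\in\mathbb{F}_4$ give $3\cdot 4\cdot 2=24$ candidates; your enumeration still correctly yields the $2\cdot 4=8$ complete mappings, since $(\beta+1)x+\gamma$ is a PP iff $\beta\neq 1$ and (D2) kills $\beta x^2+x+\gamma$).

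The genuine gap is in the $\mathcal{LP}_{4,4}$ half, where you never carry out the finite check and the ``clean'' fact you substitute for it is false --- indeed inverted. You assert that $L_{g_1}$ is the Cayley table of the Klein four-group and that this table has no transversal. But $(\mathbb{F}_4,+)$ \emph{is} the Klein four-group, so its table is $L_{x+y}\in\mathcal{LP}_{4,2}$, which by your own computation in the preceding paragraph has eight complete mappings, hence transversals; your proposal thus contradicts itself. The classical order-$4$ fact runs the other way: the transversal-free square is the one isotopic to the cyclic group $\mathbb{Z}_4$, and that is the class of $g_1$. Likewise the sum obstruction you sketch fails for the elementary abelian group --- the sum of all elements of $\mathbb{F}_4$ is $0$ in characteristic two, so no contradiction arises --- whereas it works precisely for $\mathbb{Z}_4$, where $0+1+2+3\equiv 2\pmod 4$ while a transversal would force the symbol sum to equal $2+2\equiv 0$. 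As written, your argument for the second half proves nothing (and, read literally, argues for the wrong class). The fallback you mention would close the gap, and it is what the paper actually does: write $g=s_2x^2+s_1x+s_0$, compute $\odot_{f_2}(x,g)$ explicitly, and apply the criterion $s_1^3+s_2^3=1$ of Proposition \ref{proposition_P4} to both $g$ and $\odot_{f_2}(x,g)$, obtaining the system $s_2^3+s_1^3=1$, $s_2^2+s_1^2+s_2+s_1=0$, $s_1^3+s_2=0$, which has no solution in $\mathbb{F}_4$. Either perform that computation, or replace your appeal to the Klein group by the correct statement that $L_{g_1}$ is isotopic to the $\mathbb{Z}_4$ table together with the $\mathbb{Z}_4$ sum obstruction.
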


\begin{proof} We consider the reduced polynomials
\[f_1:=x+y\in\mathcal{LP}_{4,2} \hspace{1cm}\text{and}\hspace{1cm} f_2:=(xy+x+y+1)xy+x+y\in\mathcal{LP}_{4,4}.\]
We also consider a polynomial $g:=s_2x^2+s_1x+s_0\in\mathcal{P}_4$. Then, 
\[\odot_{f_1}(x,g)=s_2x^2+(s_1+1)x+s_0\]
and
\[\odot_{f_2}(x,g)=(s_2^2+s_1^2+s_2+s_1)x^3 + (s_2^2+s_0^2+s_2+s_1+s_0)x^2 + (s_1^2+s_0^2+s_2+s_1+s_0+1)x + s_0.\]
From Proposition \ref{proposition_P4},
\[g\in\mathcal{P}_4\Leftrightarrow s_2^3+s_1^3=1.\] 
Then, the same result also implies that
\[\odot_{f_1}(x,g)\in\mathcal{P}_4\Leftrightarrow 
\begin{cases}
s_2^3+s_1^3=1,\\
s_2^3+(s_1+1)^3=1.    
\end{cases}\Leftrightarrow 
\begin{cases}
s_2=0,\\
s_1^2+s_1=1.   
\end{cases}\]
and
\begin{align*}
 \odot_{f_2}(x,g)\in\mathcal{P}_4 & \Leftrightarrow \begin{cases}
s_2^3+s_1^3=1,\\
s_2^2+s_1^2+s_2+s_1=0,\\
(s_2^2+s_0^2+s_2+s_1+s_0)^3+(s_1^2+s_0^2+s_2+s_1+s_0+1)^3=1.
\end{cases}\\
 & \Leftrightarrow \begin{cases}
s_2^3+s_1^3=1,\\
s_2^2+s_1^2+s_2+s_1=0,\\
s_1^3+ s_2=0.
\end{cases}
\end{align*}
This last system of equations has no solution, so there is no complete mapping of $f_2$. Then, the result follows straightforwardly from Proposition \ref{proposition_transversal_isotopic} and Corollary \ref{corollary_S1S2}.
\end{proof}

\vspace{0.2cm}

We finish our study by dealing with the generalization of the product (\ref{eq_hqp}) to
\[\odot_f(g_1,g_2):=f(g_1(x,y),g_2(x,y))\in\mathbb{F}_q[x,y],\]
with $f,g_1,g_2\in\mathcal{LP}_q$. In the context of Latin squares, this is equivalent to the Hadamard quasigroup product described in \cite{Falcon2023}. Lee and Ko already asked in \cite{Lee1994} if $\odot_f(g_1,g_2)\in\mathcal{LP}_q$. They showed that this condition holds whenever $f$, $g_1$ and $g_2$ are of degree at most two. In addition, they proved in \cite[Theorem 8]{Lee1994} that $ax$ is a complete mapping of $f$ for some $a\in\mathbb{F}_q\setminus\{0\}$ if and only if $\odot_f(g_1,ag_1)\in\mathcal{LP}_q$. Here, $ag_1:=a\cdot g_1(x,y)$. More generally, the following result constitutes the natural translation to LPPs of Lemma 12 in \cite{Falcon2023}. It follows readily from Lemma \ref{lemma_hqp}.

\begin{proposition}\label{proposition_hqp2} Let $f,g_1,g_2\in\mathcal{LP}_q$. Then, $\odot_f(g_1,g_2)\in\mathcal{LP}_q$ if and only if, for each $a\in\mathbb{F}_q$, both subsets
\[\left\{(g_1(a,b),\,g_2(a,b),\,\odot_f(g_1,g_2)(a,b)\colon\,b\in\mathbb{F}_q\right\}\subset \mathrm{Ent}(L_f)\]
and
\[\left\{(g_1(b,a),\,g_2(b,a),\,\odot_f(g_1,g_2)(b,a)\colon\,b\in\mathbb{F}_q\right\}\subset \mathrm{Ent}(L_f)\]
are transversals of $L_f$.
\end{proposition}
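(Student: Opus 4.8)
The plan is to reduce the claim about the local permutation polynomial $\odot_f(g_1,g_2)$ to the already-established characterization of ordinary permutation polynomials in Lemma \ref{lemma_hqp}. Recall that $\odot_f(g_1,g_2)\in\mathcal{LP}_q$ means, by definition, that for every fixed $a\in\mathbb{F}_q$ both univariate restrictions $\odot_f(g_1,g_2)(a,x)$ and $\odot_f(g_1,g_2)(x,a)$ are permutation polynomials in $\mathbb{F}_q[x]$. So I would first observe that the bivariate problem splits into two families of univariate problems indexed by $a\in\mathbb{F}_q$: the row restrictions, where we fix the first argument to $a$, and the column restrictions, where we fix the second argument to $a$.

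Next I would treat one fixed $a$ and handle the row restriction $\odot_f(g_1,g_2)(a,y)$. The key identity is that, upon substituting $x=a$, the composed polynomial $f(g_1(a,y),g_2(a,y))$ is exactly the univariate Hadamard-type product $\odot_f\bigl(g_1(a,\cdot),g_2(a,\cdot)\bigr)$ of the two univariate permutation polynomials $g_1(a,\cdot)$ and $g_2(a,\cdot)$ (which are indeed in $\mathcal{P}_q$ because $g_1,g_2\in\mathcal{LP}_q$). Applying Lemma \ref{lemma_hqp} with these two univariate PPs in place of $g_1,g_2$, the restriction $\odot_f(g_1,g_2)(a,y)$ is a PP precisely when the indicated set of triples is a transversal of $L_f$. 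This yields exactly the first family of transversal conditions in the statement. I would then repeat the argument verbatim with the roles of the two variables exchanged, fixing the second argument to $a$, to obtain the second family of transversal conditions from the column restrictions.

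Finally I would assemble the two-directional conclusion: $\odot_f(g_1,g_2)$ is an LPP if and only if \emph{all} of its row and column restrictions are PPs, which by the two applications of Lemma \ref{lemma_hqp} happens if and only if both indicated families of subsets of $\mathrm{Ent}(L_f)$, ranging over all $a\in\mathbb{F}_q$, are transversals. The main obstacle I anticipate is purely bookkeeping rather than conceptual: one must verify carefully that the univariate restriction of the bivariate composition $f(g_1(x,y),g_2(x,y))$ at $x=a$ genuinely coincides with $\odot_f$ applied to the univariate slices $g_1(a,\cdot),g_2(a,\cdot)$, since the compositions in $\mathcal{LP}_q$ are performed modulo $x^q-x$ and $y^q-y$ and one should check that substituting $x=a$ commutes with these reductions. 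Once that commutation is confirmed, the result is an immediate twofold invocation of Lemma \ref{lemma_hqp}, exactly as the remark preceding the statement advertises.
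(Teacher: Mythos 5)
Your proof is correct and takes essentially the same approach the paper intends: the paper offers no written argument beyond asserting that the proposition ``follows readily from Lemma \ref{lemma_hqp}'', and your reduction---slicing the bivariate composition at each $a\in\mathbb{F}_q$, observing that $g_1(a,\cdot),g_2(a,\cdot)\in\mathcal{P}_q$ because $g_1,g_2\in\mathcal{LP}_q$, and invoking Lemma \ref{lemma_hqp} once for the row restrictions and once for the column restrictions---is precisely that reduction spelled out. Your check that substituting $x=a$ commutes with reduction modulo $x^q-x$ (immediate since $a^q=a$ for $a\in\mathbb{F}_q$) is a sound detail the paper leaves implicit.
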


\section{Conclusion and further work}

Despite the natural translation among local permutation polynomials in two variables and the set of Latin squares, there are only some preliminary results in the literature concerning this subject. This paper has delved into this topic by emphasizing how a given family of Latin squares can be represented by a single polynomial. We have described an algebraic geometry procedure to identify the constraints satisfied by the coefficients of this polynomial, and we have shown how this procedure facilitates the enumeration and classification of Latin squares, together with the study of some of their properties. We have also introduced the natural translation to LPPs of some classical notions in the theory of Latin squares like isotopisms, complete mappings and orthomorphisms. We have illustrated all these aspects for LPPS over $\mathbb{F}_4$.

Similarly to Latin squares, which can also be distributed into conjugate and main classes, a comprehensive study is required concerning the natural translation to LPPs of conjugation and paratopisms of Latin squares, which we introduce just here.

\begin{definition}\label{definition_conjugate} Two polynomials $f,g\in\mathcal{LP}_q$ are called {\em conjugate} if one of the following six identities holds.
 \begin{enumerate}
\item $g(x,y)=f(x,y)$.

\item $g(x,y)=f(y,x)$.

\item $g(x,f(x,y))=y$.

\item $g(f(x,y),x)=y$.

\item $g(f(x,y),y)=x$.

\item $g(y,f(x,y),x)=x$.
\end{enumerate}
If all the six identities hold for $f = g$, then the polynomial $f$ is called {\em totally symmetric}.   
\end{definition}

\vspace{0.2cm}

All the notions and results considered in this paper may naturally be generalized to LPPs in more variables, which are equivalent to multiary quasigroups and Latin hypercubes. Some preliminary results on this topic have already been described in \cite{Diestelkamp2004, Gutierrez2023, Gutierrez2025,Gutierrez2025a, Mullen1980a}, but a comprehensive study is required in this regard.

\bibliography{sn-bibliography}

\end{document}